\newtheorem{thm}{Theorem}[section]
\newtheorem{lem}[thm]{Lemma}
\newtheorem{prop}[thm]{Proposition}
\theoremstyle{definition}
\newtheorem{prob}[thm]{Problem}
\def\F{\mathbb{F}}
\newcommand{\itbf}[1]{{\bf{{\emph{{#1}}}}}}
\newcommand{\Aut}[1]{\operatorname{Aut}(#1)}
\newcommand{\sym}[1]{\operatorname{Sym}(#1)}
\newcommand{\pgl}[2]{\operatorname{PGL}_{#1}(#2)}
\newcommand{\psl}[2]{\operatorname{PSL}_#1(#2)}
\newcommand{\gl}[2]{\operatorname{GL}_{#1}(#2)}
\newcommand{\tr}{\operatorname{tr}}
\newcommand{\sln}[2]{\operatorname{SL}_{#1}(#2)}
\begin{document}
	
	\title[]{The intersection densities of transitive actions of $\psl{2}{q}$ with cyclic point stabilizers}
	
	\author[A.~Behajaina]{Angelot Behajaina$^*$}\thanks{$^*$ The author acknowledges the support of the CDP C2EMPI, as well as the French State under the France-2030 programme, the University of Lille, the Initiative of Excellence of the University of Lille, the European Metropolis of Lille for their funding and support of the R-CDP-24-004-C2EMPI project.}
	
	\address{Univ. Lille, CNRS, UMR 8524, Laboratoire Paul Painlevé, F-59000 Lille, France}\email{angelot.behajaina@univ-lille.fr}
	
	\author[R.~Maleki]{Roghayeh Maleki}
	\address{Department of Mathematics and Statistics, University of Regina,\\ 3737 Wascana Parkway, Regina, SK S4S 0A2, Canada}\email{RMaleki@uregina.ca}
	
	
	\author[A.~S.~Razafimahatratra]{Andriaherimanana Sarobidy Razafimahatratra$^{**,\ddagger}$}\thanks{$^{**}$ The author gratefully acknowledges that this research was supported by the Fields Institute for Research in Mathematical Sciences.\\ $\ddagger$ Corresponding author (\href{mailto:sarobidy@phystech.edu}{sarobidy@phystech.edu}).} 
	\address{School of Mathematics and Statistics, Carleton University,\\ 1125 Colonel by drive, Ottawa, ON K1S 5B6, Canada}\email{sarobidy@phystech.edu}
	\date{\today}
	\keywords{intersection density, primitive groups, projective special linear groups, transitive groups}
	\subjclass[2020]{05C25, 05C69, 05E18, 20B05}
	
	\begin{abstract} 
		Given a finite transitive group $G\leq \sym{\Omega}$, the {intersection density} of $G$ is defined as the ratio between the size of the largest subsets of $G$ in which any two permutations agree on at least one element of $\Omega$, and the order of a point stabilizer of $G$. 
		In this paper, we completely determine the intersection densities of the permutation groups $\psl{2}{q}$, where $q$ is a power of an odd prime $p$, acting transitively with point stabilizers conjugate to $\mathbb{Z}_p$. Our proof uses an auxiliary graph, which is a $\pgl{2}{q}$-vertex-transitive graph, in which a clique corresponds to an intersecting set of $\psl{2}{q}$. For the transitive action of $\psl{2}{q}$ with point stabilizers conjugate to $\mathbb{Z}_r$, where $r\mid \frac{q-1}{2}$ is an odd prime, we show that the auxiliary graph is not regular, and we construct an intersecting set which is sometimes of maximum size.
		
	\end{abstract}
	
	\maketitle
	
	\section{Introduction}
	
	Given a finite transitive group $G\leq \sym{\Omega}$, a subset $\mathcal{F} \subset G$ is called \itbf{intersecting} if, for any $g,h\in G$, there exists $\omega\in \Omega$ such that $\omega^g = \omega^h$. We are interested in the Erd\H{o}s-Ko-Rado type problem of determining the largest intersecting sets of $G$. 
	For any $\omega\in \Omega$, the point stabilizer $G_\omega$ and its cosets are examples of intersecting sets in the transitive group $G\leq \sym{\Omega}$. Most of the works in the literature on this Erd\H{o}s-Ko-Rado type problem aim to classify the transitive groups whose largest intersecting sets are of size at most the order of a point stabilizer (see \cite{meagher2016erdHos} for example). In recent years, there has been a growing interest in transitive groups admitting intersecting sets of size larger than that of a  point stabilizer. To address this, the authors of \cite{li2020erd} introduced the intersection density of transitive groups. The \itbf{intersection density} of a finite transitive group $G\leq \sym{\Omega}$ is the rational number
	\begin{align}
		\rho(G) = \frac{\left\{ |\mathcal{F}| : \mathcal{F}\subset G \mbox{ is intersecting} \right\}}{|G|/|\Omega|}.
	\end{align}
	
	Clearly, $\rho(G)\geq 1$ for any transitive group $G\leq \sym{\Omega}$. Moreover, it was proved in \cite{meagher180triangles} that $\rho(G)\leq \tfrac{|\Omega|}{3}$ whenever $|\Omega|\geq 3$. This bound was known to be attained by the four transitive groups given in \cite[Theorem~5.1]{meagher180triangles}. Recently, Cazzola, Gogniat, and Spiga \cite{cazzola2025kronecker} proved that these are, in fact, the only transitive groups that attain this upper bound. Consequently, $\rho(G)\leq \tfrac{|\Omega|}{4}$ whenever $|\Omega| > 30$. It is worth mentioning here that the motivation for \cite{cazzola2025kronecker} comes from the study of Kronecker classes of fields extensions. The intersection densities of transitive groups of particular degrees have also been studied, and in some cases, completely determined \cite{behajaina2024intersection,hujdurovic2022intersection,hujdurovic2021intersection}. For transitive groups of degree a product of two primes, it was shown in \cite{behajaina2024intersection,hujdurovic2022intersection} that there is a strong connection between the intersection density and the existence of certain cyclic codes whose codewords have non-maximal weights.
	
	The notion of intersection density has also been extended to vertex-transitive graphs. Although the papers \cite{behajaina2023intersection,kutnar2023intersection,meagher2024intersection} define the notion of intersection density of vertex-transitive graphs in different ways, they are essentially the intersection densities of certain transitive subgroups of automorphism of the graph.
	
	\subsection{Motivation}

	As mentioned earlier, the vast majority of the works in this area focus on transitive groups with intersection density equal to $1$. The only papers studying transitive groups with intersection densities larger than $1$ are essentially \cite{cazzola2025kronecker,li2020erd,meagher180triangles}. Motivated by a deeper understanding of transitive groups with intersection density larger than $1$, Hujdurovi\'c, Kov\'acs, Kutnar and Maru\v{s}i\v{c} started a program to study such transitive groups, focusing first on those with small point stabilizers. Indeed, it was shown in \cite{hujdurovic2025intersection} that many examples of transitive groups with large intersection densities can be obtained by restricting the point stabilizers to be small. In \cite[Problem~7.4]{hujdurovic2025intersection}, Hujdurovi\'c et al. posed the following problem.
	\begin{prob}
		Given a prime $r\geq 2$, determine all possible intersection densities of transitive groups with point stabilizers isomorphic to $H = \mathbb{Z}_r$.\label{prob}
	\end{prob}	
	
	When $r = 2$ in Problem~\ref{prob}, the transitive groups with intersection density equal to $1$ were completely determined in \cite[Proposition~3.4]{hujdurovic2025intersection} using the character theory of the underlying group algebra. However, the case where $H$ has order $3$ becomes significantly more challenging, but there are many transitive groups with large intersection densities nevertheless. The main hurdle in this case seems to be the automorphism groups of cubic arc-transitive graphs with vertex stabilizers isomorphic to $\mathbb{Z}_3$. Indeed, when $H = \mathbb{Z}_3$, the orbitals of a group $G$ acting on the cosets of $H$ by right multiplication have length $1$ or $3$. Consequently, many cubic arc-transitive graphs arise from transitive groups with such point stabilizers. Studying the intersection densities of these groups was one of the main motivations behind the research \cite{hujdurovic2025intersection} and \cite{kutnar2023intersection}. 
 
	One family of transitive groups that often occurs as the full automorphism group of cubic arc-transitive graphs of type $\{1,2^1\}$ (see \cite{dobson2022symmetry} for the definition) is the projective general linear group $\pgl{2}{q}$, with point stabilizer isomorphic to $\sym{3}$. In this case, the $1$-arc regular subgroup of $\pgl{2}{q}$ is isomorphic to $\psl{2}{q}$, with stabilizer isomorphic to $\mathbb{Z}_3$. The intersection densities of some of these graphs were studied in \cite{hujdurovic2025intersection,kutnar2023intersection,meagher2024intersection}.
	For these graphs, it is natural to study Problem~\ref{prob} from the perspective of $\psl{2}{q}$, since a transitive subgroup may have a larger density than the underlying group. 
	
	The intersection densities of $\psl{2}{3^n}$, for $n\geq 1$, acting on the cosets of $\mathbb{Z}_3$ were determined by Hujdurovi\'c, Kov\'acs, Kutnar, and Maru\v{s}i\v{c} in \cite{hujdurovic2025intersection}. When $q\equiv 1\pmod 3$, the intersection density of $\psl{2}{q}$ acting on the cosets of $\mathbb{Z}_3$ was also determined in \cite{hujdurovic2025intersection}. The remaining cases, where $q\equiv 2 \pmod{3}$, were recently resolved by Meagher and the third author in \cite{meagher2025intersection}. We summarize these results in the following theorem.
	\begin{thm}
		If $q = p^k$, where $k\geq 1$ is an integer and $p$ is a prime, then the intersection density of $\psl{2}{q}$ in its action on the cosets of $H = \mathbb{Z}_3$ is given by:
		\begin{align*}
			{\rho}\left(\psl{2}{q}\right)
			=
			\begin{cases}
				3^{k-1}  &\mbox{ if $q=3^k$ and $k$ odd,}\\
				3^{\frac{k}{2}-1}  & \mbox{ if $q=3^k$ and  $k$ even,} \\
				2 & \mbox{ if $q\equiv 1\pmod 3$ and $p= 5$},\\
				\tfrac{4}{3} & \mbox{ if $q\equiv 1\pmod 3$ and $p\neq 5$},\\
				1 & \mbox{ if $q\equiv 2\pmod 3$ and $q\equiv \pm 2\pmod 5$},\\
				\tfrac{4}{3} & \mbox{ if $q\equiv 2\pmod 3$ and $q \equiv \pm 1\pmod 5$, or   $q\equiv 0\pmod 5$}. 
			\end{cases}
		\end{align*}
		\label{thm:main-3}
	\end{thm}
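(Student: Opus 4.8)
The plan is to attack the statement spectrally, via the derangement graph, which is the standard framework for these $\psl{2}{q}$ problems. Since $|G|/|\Omega| = |H| = 3$, we have $\rho(G) = \tfrac{1}{3}\,\alpha(\Gamma)$, where $\Gamma = \mathrm{Cay}(G, D)$ is the \emph{derangement graph}: its vertices are the elements of $\psl{2}{q}$ and $D$ is the set of derangements of the action on the cosets of $H = \mathbb{Z}_3$, so that an intersecting set is precisely an independent set of $\Gamma$. An element $g$ fixes a coset $xH$ iff $g \in xHx^{-1}$, so the set $F$ of fixed-point elements is $\{1\}$ together with the conjugacy class(es) meeting $H\setminus\{1\} = \{h, h^{-1}\}$, and $D = G \setminus F$. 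As $D$ is a normal symmetric subset, the eigenvalues of $\Gamma$ are indexed by the irreducible characters $\chi$ of $\psl{2}{q}$, and column orthogonality ($\sum_{g\in G}\chi(g)=0$ for nontrivial $\chi$) gives the clean formula
\[
\eta_\chi = -\frac{1}{\chi(1)}\sum_{g\in F}\chi(g).
\]
I would read off all the $\eta_\chi$ from the character table of $\psl{2}{q}$, whose irreducible degrees are $1,\,q,\,q\pm1,\,\tfrac{q\pm1}{2}$, and then bound $\alpha(\Gamma)$ from above by the ratio (Hoffman) bound $\alpha(\Gamma)\le |G|\cdot\tfrac{-\tau}{d-\tau}$, with $d=|D|$ and $\tau=\min_\chi\eta_\chi$, and from below by explicit constructions.

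The structure of $F$ splits into three regimes according to the nature of the order-$3$ elements, which is exactly the trichotomy in the statement: for $p=3$ the elements $h, h^{-1}$ are unipotent, for $q\equiv 1\pmod 3$ they lie in a split torus, and for $q\equiv 2\pmod 3$ in a non-split torus. The characteristic-$3$ case is the most structured and, crucially, it explains the split between $k$ odd and $k$ even. Writing $h = \left(\begin{smallmatrix}1&1\\0&1\end{smallmatrix}\right)$, conjugation by $\mathrm{diag}(a,a^{-1})$ shows that $h$ and $h^{-1}$ lie in the same $\psl{2}{q}$-class iff $-1$ is a square in $\mathbb{F}_q$, i.e.\ iff $q\equiv 1\pmod 4$, i.e.\ iff $k$ is even. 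Hence for $k$ odd the set $F$ contains \emph{both} unipotent classes, so the entire elementary abelian Sylow $3$-subgroup $U$ (of order $q=3^k$) is intersecting, yielding $\rho\ge 3^{k-1}$. For $k$ even only one unipotent class lies in $F$, and an intersecting subset of $U$ corresponds to a set $A\subseteq\mathbb{F}_q$ with $A-A\subseteq\{0\}\cup(\mathbb{F}_q^\times)^2$ — that is, to a clique of the Paley graph on $\mathbb{F}_q$, whose maximum is the subfield $\mathbb{F}_{3^{k/2}}$, yielding $\rho\ge 3^{k/2-1}$. The matching upper bounds then come from locating the minimizing character among the unipotent character values and applying the ratio bound.

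In the two semisimple regimes the order-$3$ class $F\setminus\{1\}$ has size $q(q+1)$ (split) or $q(q-1)$ (non-split); I would again compute $\tau$ from the character table and apply the ratio bound. The cases $\rho=1$ are precisely those where the bound is tight and attained by a point stabilizer, while $\rho=\tfrac43$ and $\rho=2$ require in addition an explicit intersecting set of size $4$, respectively $6$, together with a (floored) ratio bound certifying that nothing larger occurs. The congruence conditions modulo $5$, and the special role of $p=5$, should enter exactly here: the minimizing eigenvalue is governed by the two half-degree characters of degrees $\tfrac{q+1}{2}$ and $\tfrac{q-1}{2}$, whose values carry the quadratic irrationalities (the $\sqrt{5}$ of the underlying icosahedral arithmetic), and deciding which of them minimizes $\eta_\chi$ — hence the exact value of $\tau$ — reduces to whether certain Gauss-sum quantities are integral, which is what singles out the residues $q\equiv\pm1,\pm2\pmod 5$ and the prime $5$.

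The main obstacle, in my estimation, lies entirely away from this bookkeeping and is twofold. First, in the semisimple cases the plain ratio bound is not tight when $\rho>1$, so the upper bounds $\tfrac43$ and $2$ cannot be extracted from the least eigenvalue alone; one must either run a weighted/fractional refinement of the bound or, more likely, a module-method argument over the group algebra (in the style of the existing $\psl{2}{q}$ Erd\H{o}s--Ko--Rado literature) to pin the independence number exactly, while simultaneously exhibiting the small extremal sets of sizes $4$ and $6$. Second, in characteristic $3$ with $k$ even, identifying extremal cocliques with maximum Paley cliques gives the lower bound essentially for free, but proving that no intersecting set beats the subfield construction — that the global optimum is forced to sit inside a single unipotent subgroup — is the delicate point, and it is there that the spectral input must be pushed hardest.
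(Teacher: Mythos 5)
First, a point of order: Theorem~\ref{thm:main-3} is not proved in this paper at all. It is a survey statement, assembled from \cite{hujdurovic2025intersection} (the cases $q=3^k$ and $q\equiv 1\pmod 3$) and \cite{meagher2025intersection} (the cases $q\equiv 2\pmod 3$); the paper only records it as motivation for Theorem~\ref{thm:mainthm1} and remarks that its proof ``relies heavily on the transitivity of $\psl{2}{q}$ and $\pgl{2}{q}$ on the conjugacy classes of elements of order $3$.'' So there is no in-paper proof to compare you against, and your proposal must be judged on its own merits and against the method the paper develops for the order-$p$ analogue in Section~\ref{sect:order-p}. On the positive side, your lower-bound constructions are exactly the known extremal sets and your bookkeeping is correct: for $q=3^k$ the elements $h,h^{-1}$ lie in one $\psl{2}{q}$-class iff $-1$ is a square iff $k$ is even, so for $k$ odd the full Sylow $3$-subgroup is intersecting, while for $k$ even an intersecting subset of it is a Paley clique, maximized by the subfield $\mathbb{F}_{3^{k/2}}$ (the clique-number result of \cite{BDR88} that this paper also invokes in Lemma~\ref{lem:tildgammcliqnum}).

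The genuine gap is in every one of the upper bounds, and you have named it yourself without filling it. The ratio bound with $\tau=\min_\chi\eta_\chi$ does not deliver $\alpha=4$ or $\alpha=6$ in the semisimple regimes, nor does it force a maximum intersecting set in characteristic $3$ to sit inside a single unipotent subgroup; deferring these to ``a weighted refinement or a module-method argument'' leaves the theorem unproved precisely where it is hard. The route that actually closes these gaps --- the one this paper uses for order $p$ and which underlies the cited proofs for order $3$ --- is combinatorial and local rather than spectral: normalize so that the intersecting set contains the identity, so that all other elements have order $3$ and the problem becomes the clique number of the subgraph $\Gamma_q$ of the complement of the derangement graph induced on $\mathcal{C}_3$; observe that $\pgl{2}{q}$ acts transitively on $\Gamma_q$ by conjugation (the analogue of Lemma~\ref{Gq*trans}), so $\omega(\Gamma_q)=1+\omega(\Gamma_q[N_{\Gamma_q}(R)])$; and then compute the first subconstituent explicitly by the trace criterion of Lemma~\ref{lem:orderptra}, which reduces everything to solving a handful of quadratic equations over $\mathbb{F}_q$ (this is where the residues mod $4$ and mod $5$ genuinely enter, via solvability of $x^2=-1$ and the arithmetic of order-$3$ tori, not via Gauss sums in character values). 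If you want a completable proof, I would replace your spectral upper-bound step with this localization; as written, the proposal is a plausible plan whose decisive steps are missing.
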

	
	The proof of Theorem~\ref{thm:main-3} relies heavily on the transitivity of $\psl{2}{q}$ and $\pgl{2}{q}$ on the conjugacy classes of elements of order $3$ by conjugation.
	
	\subsection{Main results} Let $q = p^k$, where $k \geq 1$ is an integer, and $p$ is an odd prime. The objective of this paper is to generalize Theorem~\ref{thm:main-3}, that is, to find the intersection densities of $\psl{2}{q}$ acting on the cosets of a cyclic group $H$ of prime order. We completely determine the cases where $H$ has order equal to the characteristic of the field $\mathbb{F}_q$ over which $\psl{2}{q}$ is defined.
	
	Define $G_q^* = \pgl{2}{q}$ and $G_q = \psl{2}{q}$, which have orders $q(q^2-1)$ and $\frac{q(q^2-1)}{2}$, respectively. The group $G_q^*$ admits elements of order $p$, which belong to two conjugacy classes if $k$ is even and to a single conjugacy class if $k$ is odd. If $k$ is odd, then $G_q$ has a unique conjugacy class of elements of order $p$ (of size $q^2-1$), and we let $H_q$ be a representative of this conjugacy class of subgroups. If $k$ is even, then  there are two conjugacy classes of subgroups of order $p$, and we denote their representatives by $\leftindex^{-}{H}_q$ and $\leftindex^{+}{H}_q$. Our first result is stated as follows.
	\begin{thm}\label{thm:mainthm1}
		Assume $p \geq 5$. Consider the transitive group $G_q$ with stabilizer $H$ isomorphic to a cyclic group of order $p$. Then, the intersection density of $G_q$ is
		\begin{align*}
			\rho(G_q)
			=
			\begin{cases}
				\tfrac{q}{p} &\mbox{ if $k$ is odd } (H = H_q)\\
				\tfrac{\sqrt{q}}{p} &\mbox{ if $k$ is even } (H = \leftindex^-H_q \mbox{ or } H=\leftindex^{+}{H}_q).
			\end{cases}
		\end{align*}\label{thm:main}
	\end{thm}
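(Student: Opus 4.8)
The plan is to translate the EKR problem into a clique problem on a $G_q^*$-vertex-transitive graph, exhibit the extremal cliques by hand for the lower bound, and bound the clique number from above by combining the fibre structure of this graph with a spectral clique bound.

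First I would record the reduction. Since $|H| = p$, we have $\rho(G_q) = \omega/p$, where $\omega$ is the maximum size of an intersecting set, and $\mathcal{F}\subseteq G_q$ is intersecting exactly when it is a clique of the Cayley graph $\mathrm{Cay}(G_q,S)$ whose connection set $S$ is the set of nonidentity elements of $G_q$ lying in some conjugate of $H$. Because $H\cong\mathbb{Z}_p$ and $p$ is the characteristic, $S$ consists precisely of the unipotent elements conjugate into $H$: all unipotents when $k$ is odd (a single class of subgroups of order $p$), and exactly one of the two $G_q$-classes of unipotents when $k$ is even. By vertex-transitivity a maximum clique may be taken through the identity, so $\omega = 1+\omega(\mathcal{X})$, where $\mathcal{X}$ is the graph on vertex set $S$ with $x\sim y$ iff $xy^{-1}\in S$. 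I would then note that conjugation by $G_q^*=\pgl{2}{q}$ normalizes $S$ (for $k$ odd) and acts transitively on it, so $\mathcal{X}$ is $G_q^*$-vertex-transitive; for $k$ even, conjugation by an element of $G_q^*\setminus G_q$ interchanges the two classes $\leftindex^{-}{H}_q$ and $\leftindex^{+}{H}_q$, which already shows the two densities coincide and lets me treat a single class.

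Next I would produce the extremal cliques. Identify a Sylow $p$-subgroup $P$ with $(\mathbb{F}_q,+)$ via $b\mapsto u_b$, where $u_b$ is the unipotent fixing $\infty$ with parameter $b$; the subgroups of order $p$ correspond to the $\mathbb{F}_p$-lines of $\mathbb{F}_q$, and the two classes of unipotents correspond to square versus non-square parameters $b$. For $k$ odd every line meets both classes and all order-$p$ subgroups are conjugate, so $P\setminus\{e\}$ is a clique of $\mathcal{X}$ of size $q-1$, whence $\omega\geq q$ and $\rho(G_q)\geq q/p$. For $k$ even I would take the subfield $\mathbb{F}_{\sqrt q}\leq\mathbb{F}_q$: the key number-theoretic lemma is that every nonzero element of $\mathbb{F}_{\sqrt q}$ is a square in $\mathbb{F}_q$, because $\sqrt q-1$ divides $(q-1)/2=(\sqrt q-1)(\sqrt q+1)/2$ (as $\sqrt q+1$ is even). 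Hence $\{u_b: b\in\mathbb{F}_{\sqrt q}^{*}\}$ is an additive set whose differences are square-type unipotents, i.e.\ a clique of $\mathcal{X}$ of size $\sqrt q-1$, giving $\rho(G_q)\geq\sqrt q/p$.

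The hard direction is the matching upper bound $\omega(\mathcal{X})\leq q-1$ (resp.\ $\sqrt q-1$). I would exploit that each nonidentity unipotent has a unique fixed point on $\pg{1}{q}$, partitioning $S$ into $q+1$ fibres indexed by $\pg{1}{q}$. Writing a unipotent as $I+c\,v\,\omega(v,\cdot)$, with $v$ spanning its fixed point and $\omega$ the determinant form, a short computation on traces of lifts shows that two unipotents with distinct fixed points $[v_i],[v_j]$ are adjacent iff $c_i c_j\,\omega(v_i,v_j)^2=-4$; in particular each vertex has exactly one neighbour in every other fibre. For $k$ odd this makes each fibre a clique $K_{q-1}$ and the inter-fibre edges a perfect matching, forcing the dichotomy: if two clique-vertices share a fibre then, by uniqueness of the matched neighbour, the whole clique lies in that fibre (size $\leq q-1$); otherwise the clique is a transversal, which the relation $c_i c_j\,\omega(v_i,v_j)^2=-4$ constrains to a harmonic-type configuration of small size (and requires $-1$ to be a square even to exceed size $2$). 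I would close the transversal case either directly from this cross-ratio obstruction or by a Hoffman-type clique bound for $\mathcal{X}$, whose spectrum I would read off from the equitable partition into fibres: the quotient matrix $(q-3)I+J$ contributes $2q-2$ and $q-3$, while the interior and matching eigenvalues are small and negative. For $k$ even the fibres are no longer cliques: within a fibre the adjacency $a\sim b\iff a-b$ is a square is exactly the Paley graph on $\mathbb{F}_q$, whose clique number is $\sqrt q$ (attained by the subfield), so the extremal clique is the subfield clique and the bound follows from the Paley clique number together with control of the inter-fibre edges.

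The main obstacle is the final calibration of the upper bound: computing the least eigenvalue of $\mathcal{X}$ precisely enough that the spectral clique bound lands on $q-1$ rather than merely $q$ — the contribution of the inter-fibre matchings to the eigenvectors orthogonal to the fibre-constant space being the delicate part — and, in the even case, transporting the Paley clique number $\sqrt q$ through the fibre decomposition while excluding cliques that spread across several fibres. A secondary point requiring care is the two-class bookkeeping when $k$ is even, which the $G_q^*$-symmetry resolves by identifying the densities of $\leftindex^{-}{H}_q$ and $\leftindex^{+}{H}_q$.
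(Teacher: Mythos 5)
Your reduction and lower bounds are correct, and your route to the upper bound is genuinely different from the paper's. The paper never uses the global fibre structure: it invokes vertex-transitivity of the first subconstituent $\Gamma_q$ under $G_q^*$ (resp.\ $G_q$ when $k$ is even, Lemmas \ref{Gq*trans} and \ref{lem:gqtrangam}) to pass to a \emph{second} subconstituent, the common neighbourhood of $\overline{I}$ and $R$, which it computes explicitly: the centralizer of $R$ minus $\{\overline{I},R\}$ (a clique of size $q-2$, or its Paley-type restriction of clique number $\sqrt q-2$ when $k$ is even) together with a set $\mathscr{C}$ of $q$ further unipotents, with no edges between the two parts, and $\Gamma_q[\mathscr{C}]$ a disjoint union of $p$-cycles if $q\equiv 1\pmod 4$ and a coclique if $q\equiv 3\pmod 4$. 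Your partition of the unipotents by fixed point on $\pg{1}{q}$, with each fibre a clique (resp.\ a Paley subconstituent) and at most one neighbour per vertex in every other fibre, encodes the same information globally, and your dichotomy --- a clique with two vertices in a common fibre lies entirely in that fibre --- cleanly replaces the two transitivity lemmas. Your observation that conjugation by an element of $G_q^*\setminus G_q$ swaps the two classes of subgroups when $k$ is even, so that $\leftindex^{-}{H}_q$ and $\leftindex^{+}{H}_q$ give isomorphic actions, is also a genuine shortcut; the paper instead repeats the argument for the second class.

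The one step you have not actually carried out is the transversal case when $-1$ is a square, and your two suggested finishes are not equally viable. The Hoffman-type clique bound is the wrong tool here: it would have to land exactly on $q-1$, which is precisely the ``calibration'' you worry about, and there is no reason to expect tightness. But your direct route closes the case in two lines, so you should simply execute it: normalize one clique vertex to fix $\infty$ with parameter $c_1$ and write the others as fixing $[x_j:1]$ with parameter $c_j$; the relation $c_1c_j\,\omega(v_1,v_j)^2=-4$ with $\omega(v_1,v_j)=\pm 1$ forces $c_j=-4/c_1$ for all $j$, and then the pairwise relations among the remaining vertices give $(x_i-x_j)^2=-c_1^2/4$, i.e.\ $x_i-x_j=\pm t$ for a fixed $t\neq 0$. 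Three distinct points with all pairwise differences in $\{t,-t\}$ would produce a difference $\pm 2t\notin\{t,-t\}$ (as $p\geq 5$), so a transversal clique has at most $3$ vertices, comfortably below $q-1$ (resp.\ $\sqrt q-1$, using $q\geq 25$ when $k$ is even). This is exactly the content of the paper's statement that $\Gamma_q[\mathscr{C}]$ is a union of $p$-cycles, localized differently. With that computation written out (and the minor bookkeeping fixed in the even case, where the fibre is the first subconstituent of the Paley graph and so has clique number $\sqrt q-1$, not $\sqrt q$), your argument is a complete and somewhat more elementary alternative to the paper's.
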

	
	Given a permutation group $G\leq \sym{\Omega}$, the \itbf{derangement graph} of $G$ is the graph $\Gamma_G$ whose vertex set is $G$, and two vertices $g,h\in G$ are adjacent if $hg^{-1}$ is a derangement, i.e., a fixed-point-free permutation. The graph $\Gamma_G$ is clearly a Cayley graph of $G$. Due to the definition of $\Gamma_G$, we deduce that $\mathcal{F} \subset G$ is intersecting if and only if $\mathcal{F}$ is a coclique of $\Gamma_G$. Hence, $\rho(G) = \alpha(\Gamma_G)/|G_\omega|$, where $\alpha(\Gamma_G)$ is the size of the largest cocliques of $\Gamma_G$, and $\omega\in \Omega.$
	
	Recall that a vertex-transitive graph $X = (V,E)$ is called \itbf{$K$-vertex-transitive} if $K\leq \Aut{X}$ is transitive. The \itbf{first subconstituent} of a graph $X = (V,E)$ with respect to $v\in V$ is the subgraph induced by the neighbourhood $N_X(v)$ of $v$ in $X$.    
	
	To prove Theorem~\ref{thm:main}, we use the first subconstituent of the complement of the derangement graph of $G_q$. We can assume without loss of generality that any intersecting set of $G_q$ contains the identity, thus forcing every other elements in such sets to be of order $p$. In this case, an intersecting set consisting of elements of order $p$ corresponds to a clique in the first subconstituent of the complement of $G_q$.  Due to the structure of $G_q$ and $G_q^*$, it is shown that the first subconstituent of the derangement graph of $G_q$ is a $G_{q}^*$-vertex-transitive graph, and using this fact, we prove Theorem~\ref{thm:main}.
	
	When $H$ is a cyclic subgroup of order $r \mid \tfrac{q\pm 1}{2}$ of $G_q$, the above technique does not work anymore since, unless $r = 3$, the subgraph induced by the elements of order $r$ conjugate in $H$ is no longer vertex transitive. For such cases, we give a sharp lower bound on the intersection density of $G_q$ (see Theorem~\ref{thm:main2}) by constructing large intersecting sets.
	
	\subsection{Organization of the paper}
	In Section~\ref{sect:background}, we establish some preliminary results needed for the main results. In Section~\ref{sect:order-p}, we give a proof to Theorem~\ref{thm:main}. For the case where $k$ is even, we only consider  $H = \leftindex^{-}{H}_q$ since the proof of the case $H = \leftindex^{+}{H}_q$ is quite similar. In Section~\ref{sect:density-r}, we give a sharp lower bound on the intersection density of $G_q$ with point stabilizers isomorphic to a cyclic subgroup of prime order dividing $\tfrac{q- 1}{2}$ (see Theorem~\ref{thm:main2}) by constructing an intersecting set of large size. 
	In Appendix~\ref{sect:data}, we give some computational results when $r \in \{5,7\}$ and $q\leq 100$ indicating that the intersecting sets constructed in Theorem~\ref{thm:main2} are of largest size in some cases.
	\section{Background and preliminaries}\label{sect:background}
	\subsection{Graph theory}	
	Let $X = (V,E)$ be a finite, simple, and undirected graph, and denote by $\overline{X}$ its complement. A {clique} of $X$ is a subset of vertices in which any two vertices are adjacent. Analogously, a {coclique} of $X$ is a subset of vertices in which no two vertices are adjacent. We denote the maximum sizes of a clique and a coclique of $X$ by $\omega(X)$ and $\alpha(X)$, respectively.
	
	Given a vertex $v \in V$, the neighbourhood of $v$ is the set $N_{X}(v)$ of all vertices of $X$ adjacent to $v$, that is, $N_X(v) = \{u\in V: \{u,v\}\in E\}$. The \itbf{first subconstituent} of $X$ with respect to $v$ is the subgraph of $X$ induced by the neighbourhood of $v$ in $X$. 
	
	An {automorphism} of the graph $X=(V,E)$ is a permutation $\phi$ of $V$ such that $\{u,v\} \in E$ if and only if $\{\phi(u),\phi(v)\} \in E$, for all $u,v\in V$. The set $\Aut{X}$ of all automorphisms of $X$ forms a group called the {automorphism group} of $X$. If $\Aut{X}$ acts transitively on $V$, then $X$ is called a \itbf{ vertex-transitive graph}. An arc of $X$ is a pair $(u,v)$ such that $\{u,v\}\in E$. If $\Aut{X}$ acts transitively on the set of all arcs of $X$, then $X$ is called an \itbf{arc-transitive graph}.
	
		\subsection{Elements and subgroups of order $p$}\label{sect:elements-of-order-3}
		Assume that $q = p^k$, where $p$ is an odd prime and $k \geq 1$ is an integer. Recall that $G_q = \psl{2}{q}$. In this section, we establish some results about the elements and subgroups of order $p$ in $G_q$.
		\subsubsection{Notations} Let $q$ be a prime power. Recall that if $Z$ is the set of all scalar matrices in $\gl{2}{q}$, then $\pgl{2}{q} = \gl{2}{q}/Z$ and $\psl{2}{q} = \sln{2}{q}/(Z\cap \sln{2}{q})$. We denote the elements of $G_q = \psl{2}{q}$ and $G_q^* = \pgl{2}{q}$ as follows. For $\begin{pmatrix}
			a & b\\
			c & d
		\end{pmatrix} \in {\rm GL}_2(q)$, we let ${\begin{bmatrix}
				a & b\\
				c & d
		\end{bmatrix}} \in G_q^*$  
		be its image.
		
		\subsubsection{Subgroups of order $p$}\label{ss:subprimorder} First, we determine the number of conjugacy classes of subgroups of order $p$ in $G_q$. To do so, we begin by analyzing the conjugacy classes of elements of order $p$. Let $\mathcal{C}_p$ denote the set of all elements of order $p$ in $G_q$. Note that every element of order $p$ in $G_q^*$ lies in $G_q$. Moreover, $G_q^*$ contains exactly $q^2-1$ elements of order $p$, and any two such elements are conjugate. Therefore, there is a unique conjugacy class of elements of order $p$ in $G_q^*$. These elements split into two conjugacy classes of size $\tfrac{q^2-1}{2}$ each in $G_q$. By considering a nonsquare\footnote{This is possible since, when $k$ is odd (resp., even), we have $p-1 \nmid \frac{p^k-1}{2}$ (resp., $p-1 \mid \frac{p^k-1}{2}$); thus $\F_p^\times \not\subset (\F_q^\times)^2$ (resp., $\F_p^\times \subset (\F_q^\times)^2$).} $\Delta$, where
		\begin{equation}\label{eq:Deltadef}
			\begin{cases}
				\Delta \in \F_p & \textrm{if}\,\ k\,\,\textrm{is odd},\\
				\Delta \in \F_q \setminus \F_p &\textrm{if}\,\, k\,\,\textrm{is even},
			\end{cases}
		\end{equation}
		these two conjugacy classes are represented by:
		\begin{align}
			R = {\begin{bmatrix} 				1 & 1\\ 				0 & 1 		\end{bmatrix}}
			\mbox{ and }R_\Delta=
			{\begin{bmatrix} 				1 & \Delta\\ 				0 & 1 		\end{bmatrix}}.\label{eq:rep-elements-of-order-p}
		\end{align}
		As a consequence, there are at most two conjugacy classes of subgroups of order $p$ in $G_q$. In the next lemma, with respect to the parity of $k$, we prove that either $G_q$ has a unique or two conjugacy classes of subgroups of order $p$.

		\begin{lem}
			If $k$ is odd, then $G_q$ has a unique conjugacy class of subgroups of order $p$, represented by $\langle R \rangle$. If $k$ is even, then $G_q$ has two conjugacy classes of subgroups of order $p$, represented by $\langle R \rangle$ and $\langle R_\Delta \rangle$, respectively.\label{lem:number-of-subgroups}
		\end{lem}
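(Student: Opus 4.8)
The plan is to count the subgroups of order $p$ in two different ways and compare, since their total number is immediate while the size of a single conjugacy class is governed by a normalizer computation. First I would note that every subgroup of order $p$ is cyclic, contains exactly $p-1$ elements of order $p$, and that two distinct such subgroups intersect trivially; since the text already records that $G_q$ has exactly $q^2-1$ elements of order $p$, the total number of subgroups of order $p$ is $\frac{q^2-1}{p-1}$.

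Next I would determine the normalizer of $U=\langle R\rangle$ in $G_q$. The key geometric fact is that a nontrivial unipotent acts on $\pg{1}{q}$ with a unique fixed point; for $R$ this point is $\infty$, so any $g$ normalizing $U$ must fix $\infty$, forcing $N_{G_q}(U)\le B$, where $B$ is the stabilizer of $\infty$ (the upper-triangular Borel). A direct computation shows that conjugation by $\begin{bmatrix} a & b\\ 0 & a^{-1}\end{bmatrix}$ sends $R$ to $\begin{bmatrix} 1 & a^2\\ 0 & 1\end{bmatrix}$, which lies in $U$ exactly when $a^2\in\F_p^\times$. Counting the $a\in\F_q^\times$ with $a^2\in\F_p^\times$ is the same as counting $a$ with $\operatorname{ord}(a)\mid 2(p-1)$, namely $\gcd\!\big(2(p-1),\,q-1\big)$ of them; writing $q-1=(p-1)(p^{k-1}+\cdots+1)$ and using $p^{k-1}+\cdots+1\equiv k\pmod{p-1}$, hence also $\pmod 2$, gives $\gcd(2(p-1),q-1)=(p-1)\gcd(2,k)$. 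After quotienting $\sln{2}{q}$ by $\{\pm I\}$ this yields $|N_{G_q}(U)|=\tfrac12\,q(p-1)\gcd(2,k)$, so the $G_q$-conjugacy class of $U$ has size $\frac{q^2-1}{(p-1)\gcd(2,k)}$.

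To turn this into a count of classes I would first argue that all subgroups of order $p$ are conjugate in $G_q^*=\pgl{2}{q}$, so that every $G_q$-orbit of such subgroups has the same size. For this I use that $\pgl{2}{q}$ is transitive on $\pg{1}{q}$ to move any such subgroup into the unipotent radical $\mathcal U\cong(\F_q,+)$ fixing $\infty$, and then observe that the diagonal torus acts on $\mathcal U$ by scaling, hence transitively on the one-dimensional $\F_p$-subspaces. Conjugate subgroups have conjugate, thus equal-order, normalizers in the normal subgroup $G_q$, so all orbits share the size computed above, and the number of conjugacy classes is $\frac{(q^2-1)/(p-1)}{(q^2-1)/((p-1)\gcd(2,k))}=\gcd(2,k)$, which is $1$ when $k$ is odd and $2$ when $k$ is even.

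Finally I would identify the representatives. The same Borel computation shows $\begin{bmatrix} 1 & c\\ 0 & 1\end{bmatrix}\sim_{G_q}\begin{bmatrix} 1 & c'\\ 0 & 1\end{bmatrix}$ if and only if $c'/c\in(\F_q^\times)^2$. When $k$ is even we have $\F_p^\times\subseteq(\F_q^\times)^2$, so every nontrivial element of $\langle R\rangle$ lies in the class of $R$, while every nontrivial element of $\langle R_\Delta\rangle$ lies in the class of $R_\Delta$ because $\Delta$ is a nonsquare; since conjugation preserves element-classes, $\langle R\rangle$ and $\langle R_\Delta\rangle$ cannot be conjugate, and they therefore represent the two distinct classes. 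I expect the main obstacle to be the normalizer/counting step: correctly reducing the condition $a^2\in\F_p^\times$ to the gcd $(p-1)\gcd(2,k)$ and keeping careful track of the factor $2$ coming from the quotient by $\{\pm I\}$, since everything else is either bookkeeping or a short geometric observation.
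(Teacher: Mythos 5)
Your proposal is correct, but it takes a genuinely different route from the paper. The paper leans on the discussion immediately preceding the lemma: the $q^2-1$ elements of order $p$ form a single $G_q^*$-class that splits into exactly two $G_q$-classes, represented by $R$ and $R_\Delta$, so there are \emph{at most} two conjugacy classes of subgroups of order $p$; for $k$ odd one simply notes $\Delta\in\F_p$, hence $\langle R\rangle=\langle R_\Delta\rangle$, and for $k$ even one assumes $R_\Delta^i=gRg^{-1}$, expands the matrices to force $z=0$ and $x^2=i\Delta$, and derives a contradiction with $\Delta$ being a nonsquare (since $i\in\F_p^\times\subset(\F_q^\times)^2$ when $k$ is even). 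Your argument instead counts: the total number of order-$p$ subgroups is $\frac{q^2-1}{p-1}$, the normalizer of $\langle R\rangle$ in $G_q$ has order $\tfrac12 q(p-1)\gcd(2,k)$ via the reduction to the Borel and the identity $\gcd(2(p-1),q-1)=(p-1)\gcd(2,k)$, and $\pgl{2}{q}$-transitivity on these subgroups forces all $G_q$-orbits to have equal size, so the number of classes is exactly $\gcd(2,k)$; the representatives are then pinned down by the same square-class criterion $c'/c\in(\F_q^\times)^2$ that underlies the paper's contradiction. I checked the delicate points (the gcd computation, the factor of $2$ from $\{\pm I\}$, and the parity of $p^{k-1}+\cdots+1$) and they are all sound. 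What your approach buys is independence from the ``at most two classes'' input and, as a by-product, the exact normalizer order and class size $\frac{q^2-1}{(p-1)\gcd(2,k)}$; what it costs is length, whereas the paper's proof is a three-line verification once the element-class analysis of \S2.2.2 is in place. Either proof is acceptable.
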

		\begin{proof}
			Suppose first that $k$ is odd. Since $\Delta \in \F_p$, we have $\langle R\rangle=\langle R_\Delta\rangle$, so there is a unique conjugacy class of subgroups of order $p$ in $G_q$. 
			
			Suppose next that $k$ is even. Assume on the contrary that there is a unique conjugacy class of subgroups. Then there exists $g={\begin{bmatrix}
					x & y \\
					z & w
				\end{bmatrix}
			} \in G_q$ such that $\langle R_\Delta\rangle=g \langle R \rangle g^{-1}$. Thus, there exists $i \in \{1,\dots,p-1\}$ such that 
			$$
			R_\Delta^i=g R g^{-1}.
			$$
			Expanding both sides and using the equality $xw-zy=1$, we get
			\begin{align*}
				{\begin{bmatrix} 				1 & i\Delta\\ 				0 & 1 		\end{bmatrix}}
				=
				{\begin{bmatrix}
						1-xz&x^2\\
						-z^2 & 1+xz
				\end{bmatrix}}.
			\end{align*}
			Therefore, we must have $z = 0$ and $x^2 = i\Delta$. Since $i \in \F_p \subset (\F_q)^2$, it follows that $\Delta$ is a square, a contradiction. Consequently, there are two conjugacy classes of subgroups of order $p$ in $G_q$.
		\end{proof}
		Suggested by this lemma, we define $H_q=\langle R\rangle$ (resp., $\leftindex^{+}{H}_q = \langle R\rangle$ and $\leftindex^{-}{H}_q = \langle R_\Delta\rangle$) if $k$ is odd (resp., $k$ is even).
		
		\subsubsection{Elements of order $p$}
		
		For any $A\in \gl{2}{q}$, denote the trace of $A$ by $\tr(A)$. 
		\begin{lem}\label{lem:orderptra}
			Assume that $q = p^k$ for some positive integer $k$. Let $\overline{A}\in G_q$ be a non-trivial element. Then, $\overline{A} \in G_q$ has order $p$ if and only if $\tr(A) = \pm 2$. In other words, \begin{align}
				\mathcal{C}_p = \left\{ \overline{A} \in G_q : \tr(A) = \pm 2 \right\}.\label{eq:CR}
			\end{align}\label{lem:power-of-5-order-5}
		\end{lem}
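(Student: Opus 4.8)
The plan is to reduce the statement to a computation with a single representative matrix in $\sln{2}{q}$ and then treat the two implications separately, exploiting that $p$ is prime together with the behaviour of the $p$-th power map in characteristic $p$. First I would fix a representative $A\in\sln{2}{q}$ of $\overline{A}$, which is legitimate because $\overline{A}\in G_q=\psl{2}{q}$; the only remaining ambiguity is $A\mapsto -A$, so $\tr(A)$ is well defined up to sign and the condition $\tr(A)=\pm 2$ is meaningful. Since $\det A=1$, the characteristic polynomial of $A$ is $x^2-\tr(A)x+1$, with discriminant $\tr(A)^2-4$; hence $\tr(A)=\pm 2$ is exactly the condition that $A$ has a repeated eigenvalue, which is then necessarily $+1$ or $-1$ because the two eigenvalues multiply to $\det A=1$. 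Finally, because $p$ is prime, $\overline{A}$ has order $p$ if and only if $A^p\in\{\pm I\}$ while $A\notin\{\pm I\}$.

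For the implication $\tr(A)=\pm 2\Rightarrow\operatorname{ord}(\overline{A})=p$, I would replace $A$ by $-A$ if needed so that $\tr(A)=2$ (this does not change $\overline{A}$), making the characteristic polynomial $(x-1)^2$. Cayley--Hamilton then gives $(A-I)^2=0$, so writing $N=A-I$ we have $N^2=0$ and $N\neq 0$, the latter because $\overline{A}$ is non-trivial. Expanding $A^p=(I+N)^p$ by the binomial theorem and using $N^2=0$ together with $p=0$ in $\F_q$ yields $A^p=I+pN=I$. Hence $\operatorname{ord}(\overline{A})$ divides $p$, and since $\overline{A}\neq\overline{I}$ it equals $p$.

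For the converse I would argue by contraposition: assume $\tr(A)\neq\pm 2$, so $x^2-\tr(A)x+1$ has nonzero discriminant and $A$ has two distinct eigenvalues $\lambda,\lambda^{-1}\in\overline{\F_q}$ with $\lambda\neq\lambda^{-1}$. If $A^p\in\{\pm I\}$, then $A^p$ is scalar and its eigenvalue $\lambda^p$ equals $\pm 1$; but in characteristic $p$ the identity $(\lambda\mp 1)^p=\lambda^p\mp 1$ forces $\lambda=\pm 1$ accordingly, whence $\lambda=\lambda^{-1}$, a contradiction. Therefore $A^p\notin\{\pm I\}$ and $\overline{A}$ cannot have order $p$. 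Combining the two implications, and recalling that the two conjugacy classes of elements of order $p$ are represented by the trace-$2$ matrices in \eqref{eq:rep-elements-of-order-p}, yields \eqref{eq:CR}.

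I expect the only genuine subtlety to be bookkeeping with the two sign ambiguities that occur at once: the $\pm$ in $\tr(A)=\pm 2$ coming from the choice of representative $A$ versus $-A$, and the $\pm I$ arising from the quotient $\psl{2}{q}=\sln{2}{q}/\{\pm I\}$. The care needed is to check that the reduction ``without loss of generality $\tr(A)=2$'' is valid at the level of $\overline{A}$ and that the freshman's-dream step $(\lambda\mp 1)^p=\lambda^p\mp 1$ is applied to the matching sign; beyond this no real difficulty is anticipated.
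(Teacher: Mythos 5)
Your argument is correct. The sufficiency direction ($\tr(A)=\pm 2\Rightarrow$ order $p$) is in substance the same as the paper's: after normalizing to $\tr(A)=2$ (legitimate, since $A\mapsto -A$ fixes $\overline{A}$ and $p$ is odd), the paper reads off from the Jordan form that a non-identity unipotent matrix has order $p$, while you get the same conclusion from Cayley--Hamilton plus the binomial expansion $(I+N)^p=I+pN=I$; these are interchangeable. Where you genuinely diverge is the necessity direction: the paper simply cites \eqref{eq:rep-elements-of-order-p}, i.e.\ the fact already recorded in \S 2.2.2 that every order-$p$ element of $G_q$ is conjugate to $R$ or $R_\Delta$ (both of trace $2$), and that conjugation preserves the trace of a lift up to sign. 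You instead prove necessity from scratch by contraposition: if $\tr(A)\neq\pm2$ the eigenvalues $\lambda,\lambda^{-1}$ are distinct, and $A^p=\pm I$ would force $\lambda^p=\pm1$, hence $\lambda=\pm1$ by the characteristic-$p$ identity $(\lambda\mp1)^p=\lambda^p\mp1$, a contradiction. Your route is more self-contained (it does not rely on the count of order-$p$ elements in $\pgl{2}{q}$ or on the splitting of its unipotent class in $\psl{2}{q}$), at the cost of a slightly longer argument; the paper's version is shorter because it leans on structure theory it has already set up. Your handling of the two sign ambiguities (the representative $\pm A$ and the kernel $\{\pm I\}$ of $\sln{2}{q}\to\psl{2}{q}$), and the reduction ``order of $\overline{A}$ equals $p$ iff $A^p\in\{\pm I\}$ and $A\notin\{\pm I\}$,'' are both correct for odd $p$.
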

		\begin{proof}
			The necessity follows from \eqref{eq:rep-elements-of-order-p}.  
			For the sufficiency, assume that $\tr(A) = \pm 2$. We may assume without loss of generality that $\tr(A) = 2$ since the other case is the same up to multiplication by $-1$. Hence, $A\in \sln{2}{q}$ with trace equal to $2$, so its characteristic polynomial is $t^2-2t+1 = (t-1)^2$. Since $A$ is not identity matrix, we deduce that the Jordan canonical form of $A$ is a Jordan block, showing that $A$ and therefore $\overline{A}$ has order $p$.
		\end{proof}
		
		\section{Proof of Theorem \ref{thm:mainthm1}}\label{sect:order-p}
		In this section, we prove Theorem \ref{thm:mainthm1} by establishing Proposition \ref{prop:interdensgqhq} (see \S\ref{ss:mainthm1part1}) and Proposition \ref{prop:intdensgqpm} (see \S\ref{ss:mainthm1part2}).
		\subsection{The intersection density of $G_q$ on cosets of $H_q$ ($k$ odd)}\label{ss:mainthm1part1}
		Assume that $q=p^k$, where $k$ is odd and $p \geq 5$. Our goal is to prove  the following.
		\begin{prop}\label{prop:interdensgqhq}
			The transitive action of $G_q$ with stabilizer $H_q$ has intersection density equal to $\rho(G_q)=\tfrac{q}{p}$.
		\end{prop}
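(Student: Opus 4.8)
The plan is to convert the computation of $\rho(G_q)$ into a maximum-clique problem on the first subconstituent of the complement of the derangement graph $\Gamma_{G_q}$, and then to resolve that problem through the quadratic geometry of $\mathfrak{sl}_2$. Since $\Gamma_{G_q}$ is a Cayley graph, and hence vertex-transitive, I may assume that a maximum intersecting set $\mathcal{F}$ contains the identity. Then every $g\in\mathcal{F}$ fixes a coset of $H_q$; because $k$ is odd, both $G_q$-conjugacy classes of order-$p$ elements meet $H_q$ (note $R_\Delta=R^\Delta$ with $\Delta\in\F_p$ a nonsquare of $\F_q$, using $\F_p^\times\not\subseteq(\F_q^\times)^2$), so by Lemma~\ref{lem:orderptra} the non-identity elements having a fixed point are exactly the elements of $\mathcal{C}_p$. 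Thus $\mathcal{F}=\{e\}\cup S$, where $S\subseteq\mathcal{C}_p$ is a clique of the graph $X$ on vertex set $\mathcal{C}_p$ in which $\overline{U}\sim\overline{V}$ iff $\overline{V}\,\overline{U}^{-1}$ has order $p$. Hence $\rho(G_q)=(1+\omega(X))/p$, and the whole task reduces to showing $\omega(X)=q-1$. I note that $X$ is $G_q^*$-vertex-transitive via conjugation, since $G_q^*$ is transitive on $\mathcal{C}_p$ and conjugation preserves element orders.

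Next I would set up coordinates. Each $\overline{U}\in\mathcal{C}_p$ lifts uniquely to a trace-$2$ unipotent matrix $U=I+N$ with $N\neq0$ nilpotent, that is, to a nonzero isotropic vector of the $3$-dimensional quadratic space $(\mathfrak{sl}_2,Q)$ with $Q(N)=\tr(N^2)$ and polar form $B(M,N)=\tr(MN)$. A direct computation gives $\tr(VU^{-1})=2-\tr(MN)$, so by Lemma~\ref{lem:orderptra} the adjacency $\overline{U}\sim\overline{V}$ is equivalent to $B(M,N)\in\{0,4\}$. Interpreted on the smooth conic $Q=0$ in $\mathrm{PG}(2,q)$: the $q+1$ projective isotropic points correspond to the $q+1$ Sylow $p$-subgroups (equivalently the fixed points on $\mathrm{PG}(1,q)$), and the $q-1$ nonzero scalar multiples of a fixed isotropic vector $N_0$ form one such subgroup, which I will call a \emph{fibre}. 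Since the tangent line at an isotropic point meets the conic only there, $B(M,N)=0$ with both isotropic forces $[M]=[N]$; consequently any two distinct elements of a common fibre are adjacent, so each fibre is a clique of size $q-1$ and $\omega(X)\geq q-1$, while across distinct fibres adjacency requires $B=4$.

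The heart of the matter is the reverse inequality $\omega(X)\leq q-1$, which I would prove by a dichotomy on a clique $S$. If $S$ contains two elements $\lambda_1N_0,\lambda_2N_0$ of one fibre with $\lambda_1\neq\lambda_2$, then any $M$ lying in a different fibre would have to satisfy $\lambda_1B(M,N_0)=\lambda_2B(M,N_0)=4$ with $B(M,N_0)\neq0$, which is impossible; hence $S$ lies entirely in one fibre and $|S|\leq q-1$. The remaining case — a \emph{transversal} clique meeting each fibre at most once, where all pairwise values equal $4$ — is the step I expect to be the main obstacle, since a priori such a clique could try to use up to $q+1$ fibres. Here I would argue via the Gram matrix: the vectors of $S$ span a subspace of dimension at most $3$. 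If they span a plane, that plane meets the conic in at most two points, so $|S|\leq2$. If they span all of $\mathfrak{sl}_2$, I pick a basis $v_1,v_2,v_3\subseteq S$, whose Gram matrix has $0$ on the diagonal and $4$ off it; any further isotropic $v_4$ with $B(v_4,v_i)=4$ is then forced to equal $\tfrac12(v_1+v_2+v_3)$, whose norm is $Q(v_4)=6\neq0$ because $p\geq5$, a contradiction. Thus transversal cliques have size at most $3$. Since $p\geq5$ gives $q-1\geq4>3$, the fibre cliques dominate, so $\omega(X)=q-1$, the largest intersecting set has size $q$, and therefore $\rho(G_q)=q/p$. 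The essential use of the hypothesis $p\geq5$ is exactly in the transversal case, where invertibility of $6$ eliminates the extra $B=4$ edges; this is the point I would treat most carefully.
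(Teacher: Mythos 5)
Your proposal is correct, and after the shared opening reduction (translate a maximum intersecting set to contain the identity, so that it becomes $\{e\}\cup S$ with $S$ a clique of the first subconstituent on $\mathcal{C}_p$, whence $\rho(G_q)=(1+\omega(X))/p$), it diverges genuinely from the paper. The paper proceeds locally: it invokes the $G_q^*$-vertex-transitivity of $\Gamma_q$ (Lemma~\ref{lem:transitive-sub-pgl}) to reduce to the neighbourhood of $R$, computes $N_{\Gamma_q}(R)$ explicitly as $(\operatorname{C}\setminus\{I,R\})\cup\mathscr{C}$ (Lemma~\ref{lem:NGR}), and shows the induced graph there is a $(q-2)$-clique disjoint from a triangle-free graph (a union of $p$-cycles or a coclique, Lemma~\ref{lem:cliqnumtildegamm}). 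You instead argue globally through the quadratic space $(\mathfrak{sl}_2,\tr(N^2))$: identifying $\mathcal{C}_p$ with nonzero isotropic vectors and adjacency with $\tr(MN)\in\{0,4\}$ is a correct translation of Lemma~\ref{lem:orderptra}, your fibres are exactly the nontrivial elements of the $q+1$ Sylow $p$-subgroups (the paper's centralizer clique is one such fibre), and your dichotomy --- a clique either lies in a single fibre, hence has size at most $q-1$, or is a transversal of the conic, in which case the Gram-matrix computation forces any fourth vertex to equal $\tfrac12(v_1+v_2+v_3)$ with norm $6\neq0$ --- is airtight for $p\geq5$. What your route buys is a proof that bounds all cliques at once without the transitivity lemma or the explicit neighbourhood computation, and it isolates precisely where the hypothesis $p\geq5$ enters (invertibility of $6$), which is consistent with the exceptional answers at $p=3$ in Theorem~\ref{thm:main-3}; what the paper's route buys is finer local structural information (the $p$-cycle components of $\Gamma_q[\mathscr{C}]$) and a template that carries over almost verbatim to the $k$ even case in \S\ref{ss:mainthm1part2}.
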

		\subsubsection{Proof strategy for Proposition \ref{prop:interdensgqhq}}\label{ss:prstrainter} Denote by $\Theta_{G_q}$ the derangement graph associated with the action of $G_q$ on $G_q/H_q$. By Lemma \ref{lem:number-of-subgroups}, the conjugacy class of $H_q=\langle R \rangle$ in $G_q$ is the unique class of subgroups of order $p$. Consequently, an element $x \in G_q$ fixes a coset of $H_q$ if and only if $x \in \mathcal{C}_p$. \footnote{the set of elements of $G_q$ of order $p$.} The proof of Proposition \ref{prop:interdensgqhq} proceeds as follows: 
		\begin{enumerate}
			\item Noting that an intersecting set in $G_q$ corresponds to a clique in $\overline{\Theta_{G_q}}$, we have
			$$
			\rho(G_q)=\frac{\omega(\overline{\Theta_{G_q}})}{|H_q|}.
			$$
			\item Letting $\Gamma_q = \overline{\Theta_{G_q}} \left[\mathcal{C}_{p}\right]$ be the first subconstituent with respect to the identity element of the complement of the derangement graph of $G_q$(or $G_q^*$), we observe that $$\omega(\overline{\Theta_{G_q}})=\omega(\Gamma_q)+1.$$ Indeed, any clique of $\overline{\Theta_{G_q}}$ that contains $\overline{I}$ must be contained in $\{\overline{I}\}\cup \mathcal{C}_{p}$.
			\item In Lemma \ref{Gq*trans}, we show that $G_q^*$ acts transitively on $\Gamma_q$ by conjugation. Thus, letting $\widetilde{\Gamma}_q=\Gamma_q[N_{\Gamma_q}(R)]$, we have $$\omega(\Gamma_q)=1+\omega(\widetilde{\Gamma}_q).$$
			\item In Lemma \ref{lem:cliqnumtildegamm}, we prove that $\omega(\widetilde{\Gamma}_q)=q-2$.
			\item Putting everything together, we conclude that
			$$
			\rho(G_q)=\frac{q}{p}.
			$$
		\end{enumerate}
		
		\subsubsection{Proofs of the required lemmas in \S\ref{ss:prstrainter}}
		First, we prove that $G_q^*$ acts transitively on the graph $\Gamma_q$.
		\begin{lem}\label{Gq*trans}
			The group $G_q^*$ is a transitive subgroup of $\Aut{\Gamma_{q}}$, acting by conjugation. \label{lem:transitive-sub-pgl}
		\end{lem}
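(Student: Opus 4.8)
The plan is to verify two separate things: that conjugation by each element of $G_q^*$ is an automorphism of the graph $\Gamma_q$, and that the resulting action is transitive on the vertex set $V(\Gamma_q)=\mathcal{C}_p$. Transitivity is essentially already in hand from \S\ref{ss:subprimorder}, so the real content is the automorphism claim, which I would phrase via the connection-set (Cayley-graph) structure of $\overline{\Theta_{G_q}}$.

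First I would record the adjacency rule of $\Gamma_q$. As noted in \S\ref{ss:prstrainter}, the neighbours of $\overline{I}$ in $\overline{\Theta_{G_q}}$ are exactly the elements that fix a coset of $H_q$, namely $\mathcal{C}_p$, so $V(\Gamma_q)=\mathcal{C}_p$. For distinct $x,y\in\mathcal{C}_p$, the inherited adjacency reads $x\sim y$ iff $yx^{-1}$ is not a derangement, i.e. iff $yx^{-1}$ fixes a coset of $H_q$; since $yx^{-1}\neq\overline{I}$ this holds iff $yx^{-1}\in\mathcal{C}_p$, equivalently (by Lemma~\ref{lem:orderptra}) iff a representative of $yx^{-1}$ has trace $\pm 2$.

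Next I would check that, for $g\in G_q^*$, the map $\phi_g\colon x\mapsto gxg^{-1}$ is a graph automorphism. Because $G_q\trianglelefteq G_q^*$ and conjugation preserves element order, $\phi_g$ maps $\mathcal{C}_p$ bijectively onto itself, hence permutes the vertices of $\Gamma_q$. For adjacency the key computation is
$$
\phi_g(y)\,\phi_g(x)^{-1}=g\,(yx^{-1})\,g^{-1},
$$
and $g(yx^{-1})g^{-1}$ lies in $\mathcal{C}_p$ precisely when $yx^{-1}$ does; thus $\phi_g(x)\sim\phi_g(y)$ iff $x\sim y$, so $\phi_g\in\Aut{\Gamma_q}$. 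Equivalently, one may observe that $\overline{\Theta_{G_q}}$ is the Cayley graph of $G_q$ with connection set $\mathcal{C}_p$, which is invariant under $G_q^*$-conjugation, so conjugation by $G_q^*$ produces automorphisms fixing the identity vertex and therefore restricts to automorphisms of the subconstituent $\Gamma_q$. To conclude that $G_q^*$ is genuinely a \emph{subgroup} of $\Aut{\Gamma_q}$, I would note that the conjugation action is faithful because $C_{G_q^*}(G_q)=1$, which holds as $q\geq 5$.

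Finally, transitivity follows immediately from the fact recalled in \S\ref{ss:subprimorder} that the $q^2-1$ elements of order $p$ in $G_q^*$ form a single conjugacy class; hence $G_q^*$ acts transitively by conjugation on $\mathcal{C}_p=V(\Gamma_q)$. The only step requiring genuine care is the invariance of adjacency under conjugation, and even there the argument is short once the connection-set description of $\Gamma_q$ is in place; I do not expect a real obstacle, since the usual difficulty in such statements—establishing vertex-transitivity—is supplied for free by the single-conjugacy-class property of $G_q^*$.
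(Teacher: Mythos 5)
Your proposal is correct and follows essentially the same route as the paper: both arguments rest on the fact that the connection set $\mathcal{C}_p$ is invariant under $G_q^*$-conjugation (your computation $\phi_g(y)\phi_g(x)^{-1}=g(yx^{-1})g^{-1}$ is exactly the content of the paper's appeal to $\overline{\Theta_{G_q}}$ being a normal Cayley graph), together with the single-conjugacy-class property of order-$p$ elements in $G_q^*$ for transitivity and the triviality of $C_{G_q^*}(G_q)$ for faithfulness. Your write-up is simply a more hands-on unpacking of the same argument.
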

		\begin{proof}
			{Since the derangement graph $\Theta_{G_q^*}$ is a normal Cayley graph\footnote{a Cayley graph is called normal if its connection set is a union of conjugacy classes. }, its automorphism group contains $G_q^*\rtimes \operatorname{Inn}(G_q^*)$, where $G_q^*$ is a regular subgroup and $\operatorname{Inn}(G_q^*)\cong G_q^*$ acts by conjugation. Note that the group $\operatorname{Inn}(G_q^*)$ fixes the trivial permutation, so $\operatorname{Inn}(G_q^*)$ acts as automorphism of the neighbours of the trivial permutation in $\overline{\Theta_{G_q^*}}$. This action of $\operatorname{Inn}(G_q^*)$ has two orbits which are the conjugacy classes of elements of order $2$ (contained in $G_q^*\setminus G_q$) and $\mathcal{C}_p$. Since the vertex set of $\Gamma_q$ is $\mathcal{C}_p$, we deduce that $\operatorname{Inn}(G_q^*) \cong G_q^*$ acts transitively as automorphism of the graph $\Gamma_q$. It is clear that this action is faithful, so we have $G_q^*\leq \Aut{\Gamma_q}$.}
		\end{proof}

		Next, we determine $N_{\Gamma_q}(R)$.  Note that the centralizer of $R$ in $G_q^*$ is given by: $$
		\operatorname{C}_{G_q^*}(R)=\lbrace T_a : a \in \mathbb{F}_q\rbrace,
		$$ where $T_a={\begin{bmatrix} 1 & a \\ 0 & 1 \end{bmatrix}} \in G_q^*$. For simplicity, let $\operatorname{C}=\operatorname{C}_{G_q^*}(R)$.
		\begin{lem}\label{lem:NGR}
			Letting
			\begin{align}
				\mathscr{C}=\left\lbrace
				{
					\begin{bmatrix}
						1+4b&-4b^2\\
						4 & 1-4b
				\end{bmatrix}} : b\in \mathbb{F}_q \right\rbrace,
				\label{eq:neighbour-2}
			\end{align}
			we have 
			$$
			N_{\Gamma_q}(R)=(\operatorname{C}\setminus \{ {I},R \}) \bigcup \mathscr{C};
			$$ in particular $|N_{\Gamma_q}(R)|=2q-2$. Moreover, $\mathscr{C}$ is a $\operatorname{C}$-orbit of size $q$.  
		\end{lem}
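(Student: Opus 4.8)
The plan is to translate the combinatorial adjacency in $\Gamma_q$ into a pair of trace conditions via Lemma~\ref{lem:orderptra}, and then to solve the resulting system explicitly. Recall that, in the first subconstituent at the identity, a vertex $\overline{B}\in\mathcal{C}_p$ is adjacent to $R$ precisely when $\overline{B}R^{-1}$ again has order $p$. Thus $\overline{B}\in N_{\Gamma_q}(R)$ if and only if both $\overline{B}$ and $\overline{B}R^{-1}$ have order $p$, which by Lemma~\ref{lem:orderptra} means $\tr(B)=\pm 2$ and $\tr(BR^{-1})=\pm 2$. Since $p$ is odd we have $+2\neq -2$, so among the two $\sln{2}{q}$-representatives $\pm B$ of a vertex exactly one has trace $+2$; I would fix that representative $B=\begin{bmatrix} a & \beta \\ c & d\end{bmatrix}$, so that the first condition reads $a+d=2$ and only the second sign remains genuinely free.

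With $R^{-1}=\begin{bmatrix} 1 & -1 \\ 0 & 1\end{bmatrix}$ one computes $\tr(BR^{-1})=a+d-c=2-c$, so the second condition becomes $2-c=\pm 2$, i.e. $c\in\{0,4\}$ (where $4\neq 0$ as $p$ is odd). I would then treat the two cases separately. When $c=0$, the relations $a+d=2$ and $\det B=ad=1$ force $a=d=1$, so $B=T_\beta$ ranges over $\operatorname{C}$; discarding $\beta=0$ (the non-vertex $I$) and $\beta=1$ (the centre vertex $R$) yields exactly $\operatorname{C}\setminus\{I,R\}$, of size $q-2$. When $c=4$, writing $a=1+4b$ and using $d=2-a$ together with $\det B=ad-4\beta=1$ gives $\beta=-\tfrac{(a-1)^2}{4}=-4b^2$ and $d=1-4b$, which is precisely the matrix defining $\mathscr{C}$; as $b$ runs over $\F_q$ this produces all $q$ such matrices. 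The two families are disjoint (their $(2,1)$-entries are $0$ and $4$), so $N_{\Gamma_q}(R)=(\operatorname{C}\setminus\{I,R\})\cup\mathscr{C}$ has size $(q-2)+q=2q-2$.

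For the final assertion I would verify that $\operatorname{C}$ permutes $\mathscr{C}$ regularly. Since $\operatorname{C}$ centralizes $R$, conjugation by $\operatorname{C}$ stabilizes $N_{\Gamma_q}(R)$ setwise; a direct computation, conjugating $M_b=\begin{bmatrix} 1+4b & -4b^2 \\ 4 & 1-4b\end{bmatrix}\in\mathscr{C}$ by $T_a$ and completing the square in the $(1,2)$-entry (the cross term $-8ab-4a^2-4b^2$ collapses to $-4(a+b)^2$), should yield $T_a M_b T_a^{-1}=M_{a+b}$. Hence $\operatorname{C}\cong(\F_q,+)$ acts on $\mathscr{C}$ via $b\mapsto a+b$, which is transitive with trivial point stabilizers, so $\mathscr{C}$ is a single $\operatorname{C}$-orbit of size $q$; by contrast $\operatorname{C}$ fixes $\operatorname{C}\setminus\{I,R\}$ pointwise, being abelian. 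The only delicate point in the whole argument is the bookkeeping of the $\pm$ signs arising from the passage $\sln{2}{q}\to\psl{2}{q}$; once the representative with $\tr(B)=2$ is fixed, everything reduces to elementary linear algebra over $\F_q$, so I do not anticipate a genuine obstacle.
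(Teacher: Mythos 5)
Your proof is correct and follows essentially the same route as the paper's: both reduce adjacency to the trace criterion of Lemma~\ref{lem:orderptra} and split on the lower-left entry (your $c\in\{0,4\}$ is the paper's dichotomy $z=0$ or $z^2=-4(xw-yz)$), the only cosmetic difference being that you parametrize vertices directly by trace-$2$ matrices in $\sln{2}{q}$ rather than as conjugates $hRh^{-1}$, and you exhibit the $\operatorname{C}$-action as $T_aM_bT_a^{-1}=M_{a+b}$ where the paper writes $\mathscr{C}=\{T_b\left[\begin{smallmatrix}1&0\\4&1\end{smallmatrix}\right]T_b^{-1}\}$. All computations check out.
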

		\begin{proof}
			Let $g \in \mathcal{C}_p$. We can express $g$ as
			\begin{align}\label{eq:expregg}
				g={\begin{bmatrix} x &y\\ z & w \end{bmatrix}} {\begin{bmatrix} 1 & 1\\ 0 & 1 \end{bmatrix}} {\begin{bmatrix} x & y \\ z & w \end{bmatrix}}^{-1}
				={
					\begin{bmatrix}
						1-\tfrac{xz}{xw-zy}&\tfrac{x^2}{xw-zy}\\
						-\tfrac{z^2}{xw-zy} & 1+\tfrac{xz}{xw-zy}
				\end{bmatrix}},
			\end{align}
			for some $x,y,z,w\in \mathbb{F}_q$, with $xw-yz\neq 0$. The element $g$ is adjacent to $R$ in $\Gamma_q$ if and only if
			\begin{align*}
				g R^{-1}=
				{
					\begin{bmatrix}
						1-\tfrac{xz}{xw-zy}&*\\
						* & 1+\tfrac{xz}{xw-zy}+\tfrac{z^2}{xw-zy}
				\end{bmatrix}} \in \mathcal{C}_p,
			\end{align*} 
			which, by Lemma \ref{lem:orderptra}, is equivalent to
			\begin{align*}
				\frac{z^2}{xw-zy}+2 = \pm 2,
			\end{align*}
			that is, either $z = 0$ or $z^2 = -4(xw-yz)$. We now consider these two cases:
			\begin{enumerate}[1)]
				\item {\bf Case $z = 0$}. By \eqref{eq:expregg}, since $x,w$ run over all elements of $\F_q^*$, the matrix $g$ ranges over all nontrivial elements of $\operatorname{C}=\operatorname{C}_{G_q^*}(R)$.
				\item {\bf Case $z^2 = -4(xw-yz)$}. Then 
				\begin{align*}
					g=
					{
						\begin{bmatrix}
							1+4xz^{-1}&-4x^2z^{-2}\\
							4 & 1-4xz^{-1}
					\end{bmatrix}},
				\end{align*}
				where $x$ (resp., $z$) varies in $\mathbb{F}_q$ (resp., $\F_q^*$). In this case, it is clear that $g$ runs over all elements of $\mathscr{C}$. 
			\end{enumerate}
			The last statement follows from
			$$
			\mathscr{C}=\left\lbrace T_b {\begin{bmatrix} 1 & 0 \\ 4 & 1 \end{bmatrix}} T_b^{-1} : b \in \F_q  \right\rbrace.
			$$
		\end{proof}
		
		Finally, we determine the structure of  $\widetilde{\Gamma}_q=\Gamma_q[N_{\Gamma_q}(R)]$ and then deduce its clique number.
		
		\begin{lem}\label{lem:cliqnumtildegamm}
			The graph $\widetilde{\Gamma}_q$ is the disjoint union of the clique $\Gamma_q[\operatorname{C} \setminus \{ \overline{I},R\}]$, of size $q-2$, and the subgraph $\Gamma_q[\mathscr{C}]$. Moreover:
			\begin{itemize}
				\item If $q \equiv 1 \pmod{4}$, then $\Gamma_q[\mathscr{C}]$ is a disjoint union of $q/p$ cycles of length $p$.
				\item If $q\equiv 3 \pmod{4}$, then $\Gamma_q[\mathscr{C}]$ is edge-free (coclique).
			\end{itemize}
			As a consequence, $\omega(\widetilde{\Gamma}_q)=q-2$.
		\end{lem}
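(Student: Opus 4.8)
The plan is to read adjacency in $\Gamma_q$ directly off the trace criterion of Lemma~\ref{lem:orderptra}: two distinct vertices $u,v\in\mathcal{C}_p$ are adjacent precisely when $vu^{-1}\in\mathcal{C}_p$, i.e.\ when an $\sln{2}{q}$-representative of $vu^{-1}$ has trace $\pm 2$. Since Lemma~\ref{lem:NGR} already exhibits $N_{\Gamma_q}(R)$ as the union of $\operatorname{C}\setminus\{I,R\}$ (with $\operatorname{C}=\{T_a:a\in\F_q\}$) and the $\operatorname{C}$-orbit $\mathscr{C}$, I would prove the decomposition by computing the three relevant families of traces and then analyze $\Gamma_q[\mathscr{C}]$ on its own.

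First, for the clique claim, take $T_a,T_{a'}$ with $a\neq a'$ in $\F_q\setminus\{0,1\}$; then $T_{a'}T_a^{-1}=T_{a'-a}$ is a nontrivial unipotent of trace $2$, hence of order $p$, so every such pair is adjacent and $\Gamma_q[\operatorname{C}\setminus\{I,R\}]$ is a clique of size $q-2$. Next, to rule out edges between the two parts, I would compute, for $T_a$ with $a\notin\{0,1\}$ and any element of $\mathscr{C}$,
\[
\tr\!\left({\begin{bmatrix} 1+4b & -4b^2 \\ 4 & 1-4b\end{bmatrix}} T_a^{-1}\right)=2-4a,
\]
which equals $\pm 2$ only for $a\in\{0,1\}$; hence no element of $\mathscr{C}$ is adjacent to any $T_a$ with $a\notin\{0,1\}$, establishing the disjoint-union structure.

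The heart of the argument is the internal structure of $\Gamma_q[\mathscr{C}]$. Writing $M_b={\begin{bmatrix} 1+4b & -4b^2 \\ 4 & 1-4b\end{bmatrix}}$, the key identity to establish is
\[
\tr\!\left(M_{b'}M_b^{-1}\right)=2+16(b'-b)^2 .
\]
For distinct $b,b'$ this forces adjacency exactly when $2+16(b'-b)^2=-2$, i.e.\ $(b'-b)^2=-\tfrac14$. Because $\tfrac14$ is a square, this is solvable iff $-1$ is a square in $\F_q$, which holds iff $q\equiv 1\pmod 4$. When $q\equiv 3\pmod 4$ there are no solutions, so $\Gamma_q[\mathscr{C}]$ is edge-free. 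When $q\equiv 1\pmod 4$, fixing $i\in\F_q$ with $i^2=-1$, the vertex $M_b$ is adjacent precisely to $M_{b\pm i/2}$; thus $b\mapsto M_b$ identifies $\Gamma_q[\mathscr{C}]$ with the Cayley graph on $(\F_q,+)$ with connection set $\{\pm i/2\}$. As every nonzero element of $(\F_q,+)$ has additive order $p$, this graph is a disjoint union of $q/p$ cycles of length $p$.

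Finally, the clique number follows at once: any clique of a disjoint union lies in a single part, the clique part contributes $q-2$, and—using $p\geq 5$, so that a $p$-cycle (or an edgeless graph) has clique number at most $2<q-2$—the part $\Gamma_q[\mathscr{C}]$ contributes nothing larger; hence $\omega(\widetilde{\Gamma}_q)=q-2$. I expect the only genuinely delicate point to be the identity $\tr(M_{b'}M_b^{-1})=2+16(b'-b)^2$ together with the careful handling of the $\pm 2$ ambiguity inherent in working in $\psl{2}{q}$ rather than $\sln{2}{q}$; translating "$-1$ is a square" into the congruence $q\equiv 1\pmod 4$ and counting the cycle lengths via the additive order $p$ are then routine.
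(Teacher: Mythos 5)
Your proof is correct and follows essentially the same route as the paper's: the same two trace computations, $\tr(Z_bT_a^{-1})=2-4a$ and $\tr(Z_{b'}Z_b^{-1})=2+16(b'-b)^2$, drive both arguments, with the same reduction of adjacency to $4(b-b')^2=-1$ and hence to whether $-1$ is a square in $\F_q$. The only cosmetic difference is that you package the cycle decomposition as a Cayley graph on $(\F_q,+)$ with connection set $\{\pm\sqrt{-1}\cdot 2^{-1}\}$, whereas the paper identifies the components as cosets of $\F_p$ (using that $\sqrt{-1}\in\F_p$ when $k$ is odd); both give $q/p$ cycles of length $p$ and the same conclusion $\omega(\widetilde{\Gamma}_q)=q-2$.
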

		\begin{proof}
			For simplicity, for $b \in \F_q$, let $$Z_b={\begin{bmatrix} 1+4b & -4b^2\\ 4 & 1-4b \end{bmatrix}}.
			$$ By Lemma \ref{lem:NGR}, we have  
			$$
			N_{\Gamma_q}(R)=(\operatorname{C}\setminus \{ \overline{I},R \}) \cup \mathscr{C}.
			$$
			
			First, we claim that there are no edges between $\operatorname{C} \setminus \{ \overline{I},R\}$ and $\mathscr{C}$. Indeed, for any $T_a \in \operatorname{C} \setminus \{ \overline{I}, R\}$ (with $a \neq 0,1$) and any $b \in \F_q$, we have 
			$$\tr(Z_b T_a^{-1})=2-4a \neq \pm 2,
			$$
			so by Lemma \ref{lem:orderptra}, $T_a$ and $Z_b$ are not adjacent.
			
			Next, observe that $\operatorname{C} \setminus \{ \overline{I},R \}$ is a clique of size $q-2$.	Now, consider the subgraph $\Gamma_q[\mathscr{C}]$. For any distinct $x,y\in \mathbb{F}_q$, the element $Z_x Z_y^{-1}$ belongs to $\mathcal{C}_p$ if and only if 
			$$\tr(Z_x Z_y^{-1})=2+16(x-y)^2=\pm 2,
			$$ which holds if and only if 
			$$4(x-y)^2 = -1,
			$$ or equivalently 
			$$x=y \pm 2^{-1}\sqrt{-1}.
			$$  
			\vskip 1mm
			\noindent
			{\bf-Case 1: $q \equiv 1 \pmod{4}$.} In this case, since $k$ is odd, we have $p \equiv 1 \pmod{4}$, and thus $\sqrt{-1} \in \F_p$. It follows that each coset of $\F_q$ modulo $\F_p$ determines a component of $\Gamma_q[\mathscr{C}]$, which is a cycle of length $p$.
			\vskip 1mm
			\noindent
			{\bf-Case 2: $q \equiv 3 \pmod 4$.} In this case, $\sqrt{-1} \notin \F_q$, and thus $4(x-y)^2=-1$ has no solution in $\F_q$. In particular, $\Gamma_q[\mathscr{C}]$ is edge-free.
		\end{proof}
		
		\subsection{The intersection density of $G_q$ on cosets of $\leftindex^-H_q$ and $\leftindex^{+}{H}_q$ ($k$ even)}\label{ss:mainthm1part2}
		Assume $q=p^k$, where $k$ is even and $p \geq 5$. Let $\Delta \in \F_q$ be a non-square. Our goal is to prove  the following.
		\begin{prop}\label{prop:intdensgqpm}
			\begin{enumerate}[1)]
				\item The transitive action of $G_q$ with stabilizer $\leftindex^{-}{H}_q= \langle R_{\Delta} \rangle$ has intersection density $$
				\rho(G_q)=
				\frac{\sqrt{q}}{p}.
				$$\label{prop:intdensgqpsm-1}
				\item The transitive action of $G_q$ with stabilizer $\leftindex^{+}{H}_q=\langle R \rangle$ has intersection density $$
				\rho(G_q)=
				\frac{\sqrt{q}}{p}.
				$$\label{prop:intdensgqpsm-2}
			\end{enumerate}
		\end{prop}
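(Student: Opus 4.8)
The plan is to follow the subconstituent strategy of \S\ref{ss:prstrainter} almost verbatim, with two changes dictated by the parity of $k$: the acting symmetry group drops from $G_q^*$ to $G_q$ (since, for $k$ even, $\pgl{2}{q}$ fuses the two $G_q$-classes of order-$p$ elements and therefore does not preserve the class attached to $\leftindex^{-}{H}_q$), and the ``full centralizer clique'' of Lemma \ref{lem:cliqnumtildegamm} is replaced by a clique of size $\sqrt q$ coming from the subfield $\F_{\sqrt q}\subset\F_q$. I treat \ref{prop:intdensgqpsm-1} in detail and deduce \ref{prop:intdensgqpsm-2} at the end.

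First I would fix the class structure. Write $\mathcal{C}_p^-$ for the $G_q$-conjugacy class of $R_\Delta$. Since $k$ is even we have $\F_p^\times\subset(\F_q^\times)^2$, so for $1\le i\le p-1$ the upper-right entry $i\Delta$ of $R_\Delta^i$ is a non-square, whence every nontrivial element of $\leftindex^{-}{H}_q$ lies in $\mathcal{C}_p^-$; consequently an element of $G_q$ fixes a coset of $\leftindex^{-}{H}_q$ if and only if it lies in $\{I\}\cup\mathcal{C}_p^-$. Exactly as in \S\ref{ss:prstrainter} this gives $\rho(G_q)=\omega(\overline{\Theta_{G_q}})/p$ and $\omega(\overline{\Theta_{G_q}})=1+\omega(\Gamma_q^-)$, where $\Gamma_q^-=\overline{\Theta_{G_q}}[\mathcal{C}_p^-]$. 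Because $\mathcal{C}_p^-$ is a single $G_q$-class and $G_q$ is simple for $p\ge 5$, the group $\operatorname{Inn}(G_q)\cong G_q$ acts faithfully and transitively on $\Gamma_q^-$ by conjugation (this is the analogue of Lemma \ref{Gq*trans}, with $G_q$ in place of $G_q^*$); fixing the vertex $R_\Delta$ then yields $\omega(\Gamma_q^-)=1+\omega(\widetilde{\Gamma}_q^-)$ with $\widetilde{\Gamma}_q^-=\Gamma_q^-[N_{\Gamma_q^-}(R_\Delta)]$. Thus everything reduces to proving $\omega(\widetilde{\Gamma}_q^-)=\sqrt q-2$. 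For the lower bound I would exhibit the subfield clique $\{T_{\Delta a}:a\in\F_{\sqrt q}^\times,\ a\neq 1\}$, whose pairwise quotients $T_{\Delta(a-a')}$ and whose quotients $T_{\Delta(a-1)}$ by $R_\Delta$ are non-squares, hence lie in $\mathcal{C}_p^-$.

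Next I would determine $N_{\Gamma_q^-}(R_\Delta)$ and split it into two pieces, paralleling Lemmas \ref{lem:NGR}--\ref{lem:cliqnumtildegamm}. Parametrising $\mathcal{C}_p^-$ by the conjugates of $T_\Delta$ and computing $\tr(gR_\Delta^{-1})$ via Lemma \ref{lem:orderptra}, the neighbours of $R_\Delta$ fall into (A) those fixing the same point of the projective line as $R_\Delta$, i.e.\ the ``centralizer'' family $\{T_b:\ b\ \text{and}\ b-\Delta\ \text{non-squares}\}$, and (B) a family (the analogue of $\mathscr{C}$) fixing other points, cut out by $z^2=-4\Delta^{-2}$, which has solutions since $q\equiv 1\pmod 4$. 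I would then run the two trace computations showing that there are no edges between (A) and (B) and that the induced graph on (B) has clique number at most $2$, hence $\le\sqrt q-2$ for $p\ge 5$; since the class constraint ``stay in $\mathcal{C}_p^-$'' only deletes edges relative to the $k$-odd situation, these claims are no harder than in Lemma \ref{lem:cliqnumtildegamm}. This isolates the whole problem in part (A).

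The heart of the matter is the clique number of part (A), which I would control through the Paley graph $P(q)$ on $\F_q$ and its complement $NS(q)$ (the non-square-difference Cayley graph). A set of such $T_b$ is a clique in $\Gamma_q^-$ precisely when the $b$'s are non-squares with pairwise non-square differences; given any such clique $B$, the set $B\cup\{0\}$ is again a clique of $NS(q)$, because $q\equiv 1\pmod4$ makes $-1$ a square and hence $-b$ a non-square for every $b\in B$, so $|B|\le\omega(NS(q))-1$. Since $P(q)$ is self-complementary for $q\equiv 1\pmod 4$, the clique--coclique bound gives $\omega(NS(q))=\omega(P(q))=\sqrt q$, with equality realised by the subfield $\F_{\sqrt q}$; therefore $|B|\le\sqrt q-1$, and deleting $R_\Delta$ shows the clique number of part (A) inside $\widetilde{\Gamma}_q^-$ is $\sqrt q-2$, attained by the subfield clique above. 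Combining the three paragraphs yields $\omega(\widetilde{\Gamma}_q^-)=\sqrt q-2$ and hence $\rho(G_q)=\sqrt q/p$, proving \ref{prop:intdensgqpsm-1}. For \ref{prop:intdensgqpsm-2} I would simply note that $\leftindex^{-}{H}_q$ and $\leftindex^{+}{H}_q$ are conjugate in $G_q^*$, so conjugation by any $g^*\in G_q^*\setminus G_q$ is an automorphism of $G_q$ carrying one coset action to the other and preserving derangements; the two intersection densities therefore coincide. The main obstacle is the part-(B) analysis: the ``same fixed point'' neighbours are handled cleanly by the subfield/Paley structure, but ruling out larger cliques that recruit the cross-fixed-point neighbours requires the explicit trace computations paralleling Lemma \ref{lem:cliqnumtildegamm}, now carried out under the extra constraint of remaining inside the single class $\mathcal{C}_p^-$.
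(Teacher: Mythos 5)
Your proposal is correct and follows essentially the same route as the paper: the same subconstituent reduction $\rho(G_q)=\bigl(2+\omega(\widetilde{\Gamma}_q^-)\bigr)/p$, the same split of $N_{\Gamma_q^-}(R_\Delta)$ into your family (A) $=\mathscr{C}_1$ (unipotents $T_b$ with $b$ and $b-\Delta$ non-square) and family (B) $=\mathscr{C}_2$ with no edges between them and clique number at most $2$ on (B), the same Paley-graph upper bound $\sqrt q-2$ for (A) (you adjoin $\{0,\Delta\}$ to a clique of non-squares with non-square differences, where the paper divides by $\Delta$ and adjoins $\{0,1\}$ to a clique of $P(q)$ --- note that $\omega$ of the non-square Cayley graph equals $\omega(P(q))=\sqrt q$ by self-complementarity alone; the clique--coclique bound is not what is being used), and the same subfield clique $\F_{\sqrt q}$ for the lower bound. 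The only genuine departure is your deduction of part 2) from part 1) by conjugating with an element of $\pgl{2}{q}\setminus\psl{2}{q}$ that carries $\leftindex^{+}{H}_q$ to $\leftindex^{-}{H}_q$, which is a clean shortcut where the paper simply asserts that the second proof is analogous.
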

		\subsubsection{Proof strategy for Proposition \ref{prop:intdensgqpm}}\label{ss:prprointeeee} We prove only Proposition \ref{prop:intdensgqpm} \ref{prop:intdensgqpsm-1}), as the proof of Proposition \ref{prop:intdensgqpm} \ref{prop:intdensgqpsm-2}) is quite similar. Denote by $\Gamma_{G_q}^-$ the derangement graph associated with the action of $G_q$ on $G_q/\leftindex^{-}{H}_q$. Let $\mathcal{D}_{\Delta}$ be the conjugacy class of $R_\Delta$ in $G_q$. By Lemma~\ref{lem:number-of-subgroups}, an element $x \in G_q$ fixes a coset of $\leftindex^{-}{H}_q$ if and only if $x \in \mathcal{D}_{\Delta}$. The proof of Proposition \ref{prop:intdensgqpm} \ref{prop:intdensgqpsm-1}) proceeds as follows:
		\begin{enumerate}
			\item Since an intersecting set in $G_q$ corresponds to a clique in $\overline{\Gamma^-_{G_q}}$, we have
			$$
			\rho(G_q)=\frac{\omega(\overline{\Gamma^-_{G_q}})}{|\leftindex^{-}{H}_q|}.
			$$
			\item Letting $\Gamma^-_q = \overline{\Gamma^-_{G_q}} \left[\mathcal{D}_{\Delta}\right]$, we observe that $$\omega(\overline{\Gamma^-_{G_q}})=\omega(\Gamma^-_q)+1.$$ Indeed, any clique in $\overline{\Gamma^-_{G_q}}$ that contains $\overline{I}$ must be contained in $\{\overline{I}\}\cup \mathcal{D}_{\Delta}$.\label{Gamma-}
			\item In Lemma \ref{lem:gqtrangam}, we show that $G_q$ acts transitively on $\Gamma^-_q$ by conjugation. Thus, defining $\widetilde{\Gamma}^-_q=\Gamma^-_q[N_{\Gamma^-_q}(R)]$ we obtain $$\omega(\Gamma^-_q)=1+\omega(\widetilde{\Gamma}^-_q).$$
			\item In Lemma \ref{lem:tildgammcliqnum}, we prove that $$
			\omega(\widetilde{\Gamma}^-_q)=\sqrt{q}-2.
			$$
			\item Putting everything together, we conclude
			$$
			\rho(G_q)=
			\frac{\sqrt{q}}{p}.
			$$
		\end{enumerate}
		\subsubsection{Proofs of the required lemmas in \S\ref{ss:prprointeeee}}
		First, note that $G_q$ acts transitively on $\Gamma_q^-$. We omit the proof of this since it is analogous to that of Lemma~\ref{lem:transitive-sub-pgl}.
		\begin{lem}\label{lem:gqtrangam}
			The group $G_q$ is a transitive subgroup of $\Aut{\Gamma_{q}^-}$, acting by conjugation.
		\end{lem}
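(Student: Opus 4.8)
The plan is to mirror the argument of Lemma~\ref{lem:transitive-sub-pgl}, replacing $G_q^*$ by $G_q$ and exploiting that the relevant neighborhood is now a single conjugacy class. First I would record that the derangement graph $\Gamma_{G_q}^-$ is a normal Cayley graph of $G_q$: its connection set is the set of derangements of the action of $G_q$ on $G_q/\leftindex^{-}{H}_q$, and this set is closed under conjugation (a conjugate of a fixed-point-free permutation is fixed-point-free), hence a union of conjugacy classes. Consequently $\Aut{\Gamma_{G_q}^-}$, and likewise $\Aut{\overline{\Gamma_{G_q}^-}}$, contains the regular representation of $G_q$ together with $\operatorname{Inn}(G_q)$ acting by conjugation; explicitly, for each $h\in G_q$ the map $x\mapsto hxh^{-1}$ is a graph automorphism, being a group automorphism that preserves the connection set.

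Next I would identify the vertex set of $\Gamma_q^-$ with the neighborhood of the identity. By the observation preceding the lemma (a consequence of Lemma~\ref{lem:number-of-subgroups}), an element of $G_q$ fixes a coset of $\leftindex^{-}{H}_q$ if and only if it lies in $\{\overline{I}\}\cup \mathcal{D}_\Delta$; equivalently, the non-derangements are exactly $\{\overline{I}\}\cup\mathcal{D}_\Delta$. Therefore the connection set of $\overline{\Gamma_{G_q}^-}$ is precisely $\mathcal{D}_\Delta$, so the neighborhood of $\overline{I}$ in $\overline{\Gamma_{G_q}^-}$ equals $\mathcal{D}_\Delta$, which is exactly the vertex set of $\Gamma_q^-$.

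Since $\operatorname{Inn}(G_q)$ fixes $\overline{I}$, it restricts to a group of automorphisms of the first subconstituent $\Gamma_q^-$. The key point, and where this argument is actually simpler than Lemma~\ref{lem:transitive-sub-pgl}, is that the neighborhood $\mathcal{D}_\Delta$ is a single conjugacy class of $G_q$, so conjugation acts transitively on it by definition; there is no splitting into several orbits as there was for $G_q^*$. Finally, because $p\geq 5$ forces $G_q=\psl{2}{q}$ to be simple, it is centerless, so $\operatorname{Inn}(G_q)\cong G_q$ and the conjugation action is faithful. Combining these facts, $G_q\cong\operatorname{Inn}(G_q)$ embeds into $\Aut{\Gamma_q^-}$ and acts transitively, as claimed.

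The only genuine thing to verify, and the step I would treat most carefully, is that the neighborhood of the identity in $\overline{\Gamma_{G_q}^-}$ is the \emph{single} class $\mathcal{D}_\Delta$ rather than a larger union of orbits; this is guaranteed by the description of the non-derangements above, which in turn rests on all nontrivial powers of $R_\Delta$ being $G_q$-conjugate to $R_\Delta$. That conjugacy uses $\F_p^\times\subset(\F_q^\times)^2$ for even $k$, exactly as in the footnote to \eqref{eq:Deltadef}, and it is this inclusion that collapses the neighborhood into one class and makes the transitivity immediate.
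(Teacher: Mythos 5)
Your proof is correct and is essentially the argument the paper intends: the paper omits the proof, stating it is analogous to Lemma~\ref{lem:transitive-sub-pgl}, and your adaptation (normal Cayley graph, $\operatorname{Inn}(G_q)$ fixing $\overline{I}$ and hence acting on the first subconstituent, which here is the single class $\mathcal{D}_\Delta$) is exactly that analogue. Your extra care in checking that all nontrivial powers of $R_\Delta$ lie in $\mathcal{D}_\Delta$ via $\F_p^\times\subset(\F_q^\times)^2$ is a welcome addition rather than a deviation.
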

		Next, we establish the following.
		\begin{lem}
			Suppose $q$ is odd. Then there are $\frac{q+1}{2}$ elements $x \in \mathbb{F}_q$ such that $x^{2}-1$ is a square in $\mathbb{F}_q$.\label{lem:square1}
		\end{lem}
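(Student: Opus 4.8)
The plan is to reduce this count to the number of points on the affine conic $x^2 - y^2 = 1$ over $\mathbb{F}_q$, which can be determined exactly and cheaply, and then to recover the desired quantity by fibering those points over the $x$-coordinate.

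First I would observe that an element $x \in \mathbb{F}_q$ has the property that $x^2 - 1$ is a square precisely when the equation $y^2 = x^2 - 1$ admits a solution $y \in \mathbb{F}_q$. Writing $N$ for the number of such $x$, I would therefore count instead the set $\mathcal{S} = \{(x,y) \in \mathbb{F}_q^2 : x^2 - y^2 = 1\}$ and relate $|\mathcal{S}|$ to $N$ via the projection $(x,y) \mapsto x$. To compute $|\mathcal{S}|$, I would pass to the coordinates $u = x - y$ and $v = x + y$. Since $q$ is odd, the map $(x,y) \mapsto (u,v)$ is an $\mathbb{F}_q$-linear bijection of $\mathbb{F}_q^2$ (its matrix has determinant $2 \neq 0$), and under it the relation $x^2 - y^2 = 1$ becomes $uv = 1$. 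The solutions of $uv = 1$ are exactly the pairs $(u, u^{-1})$ with $u \in \mathbb{F}_q^{\times}$, so $|\mathcal{S}| = q - 1$.

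Next I would count the fibers of the projection $\mathcal{S} \to \mathbb{F}_q$. For a fixed $x$, the number of $y$ with $y^2 = x^2 - 1$ equals $1$ when $x^2 - 1 = 0$, that is, when $x = \pm 1$ (two distinct values, as $q$ is odd); it equals $2$ when $x^2 - 1$ is a nonzero square; and it equals $0$ otherwise. Writing $b$ for the number of $x$ such that $x^2 - 1$ is a nonzero square, summing the fiber sizes yields $2\cdot 1 + b\cdot 2 = |\mathcal{S}| = q - 1$, whence $b = \tfrac{q-3}{2}$. The quantity sought counts the $x$ for which $x^2 - 1$ is a square (with $0$ counted as a square): these are the two values $x = \pm 1$ together with the $b$ values producing a nonzero square, so $N = 2 + \tfrac{q-3}{2} = \tfrac{q+1}{2}$, as claimed.

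There is no real obstacle here; the only points requiring attention are the separate treatment of $x = \pm 1$, where $x^2 - 1 = 0$ contributes a single $y$ rather than two, and the use of the oddness of $q$ both to invert $2$ in the change of variables and to guarantee $1 \neq -1$. As an alternative, one could evaluate the character sum $\sum_{x \in \mathbb{F}_q} \chi(x^2 - 1) = -1$, where $\chi$ is the quadratic character of $\mathbb{F}_q$, and deduce $N = \tfrac{1}{2}\bigl(q + \sum_{x}\chi(x^2-1)\bigr) + 1 = \tfrac{q+1}{2}$; the conic-counting argument above has the advantage of avoiding any appeal to a general character-sum formula.
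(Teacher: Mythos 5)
Your proof is correct. Every step checks out: the linear change of variables $(x,y)\mapsto(x-y,x+y)$ is invertible because $q$ is odd, the hyperbola $uv=1$ has exactly $q-1$ points, and the fiber count $2\cdot 1+2b=q-1$ gives $b=\tfrac{q-3}{2}$ values of $x$ with $x^2-1$ a nonzero square, hence $N=2+\tfrac{q-3}{2}=\tfrac{q+1}{2}$ once $x=\pm 1$ are added back. The paper proceeds differently: it stays in one variable, noting that for $x\neq 1$ the element $x^2-1=(x-1)(x+1)$ is a square if and only if the ratio $\tfrac{x+1}{x-1}=1+\tfrac{2}{x-1}$ is a square, and then inverts this Möbius substitution to parametrize the admissible $x\neq 1$ as $x=\tfrac{y^2+1}{y^2-1}$, counting them by observing that $y$ and $z$ give the same $x$ exactly when $y=\pm z$; this yields $\tfrac{q-1}{2}$ values of $x\neq 1$, plus the value $x=1$. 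Your conic-counting argument is the more standard and arguably more robust route --- it handles the degenerate values $x=\pm 1$ transparently through the fiber sizes and requires no case analysis on whether $x-1$ vanishes --- while the paper's parametrization is shorter to state but leans on the multiplicativity of the square class (valid only after excluding $x=\pm 1$) and on a slightly delicate count of the fibers of $y\mapsto\tfrac{y^2+1}{y^2-1}$. Either argument fully establishes the lemma.
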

		\begin{proof}
			{ Note that if $x\neq 1$, then $x^2-1 = (x-1)(x+1)$ is a square if and only if $\tfrac{x+1}{x-1} = 1 + \tfrac{2}{x-1}$ is a square. Therefore, if $1 + \tfrac{2}{x-1}=y^2$ for some $y\in \mathbb{F}_q$, then $x = \tfrac{y^2+1}{y^2-1}$. By noting that $\tfrac{y^2+1}{y^2-1} = \tfrac{z^2+1}{z^2-1}$ if and only if $y=\pm z$, it is immediate that there are $\tfrac{q-3}{2}+1 = \tfrac{q-1}{2}$ choices for $x$, with $x\neq 1$. Since $x^2-1$ is a square when $x = 1$, we deduce that there are $\tfrac{q+1}{2}$ elements $x\in\mathbb{F}_q$ such that $x^2-1$ is a square.}
		\end{proof}
		
		Now, we determine $N_{\Gamma_q^-}(R_\Delta)$.  Observe that the centralizer of $R_\Delta$ in $G_q$ is given by $$
		\operatorname{C}_{G_q}(R_\Delta)=\lbrace T_a : a \in \mathbb{F}_q\rbrace,
		$$ where $T_a={\begin{bmatrix} 1 & a \\ 0 & 1 \end{bmatrix}} \in G_q$. For simplicity, let $\operatorname{C}=\operatorname{C}_{G_q}(R_\Delta)$.
		\begin{lem}
			Letting $$
			\mathscr{C}_1= \left\{  
			{\begin{bmatrix} 				1 & a^2\Delta\\ 				0 & 1 		\end{bmatrix}}
			:
			a\in \mathbb{F}_q\setminus\{0,1,-1\}, a^2-1 \in \left(\mathbb{F}_q\right)^2
			\right\}
			$$ and
			$$
			\mathscr{C}_2=\left\{
			{\begin{bmatrix}
					1+4{b\Delta^{-1}}&-4{ b^2\Delta^{-1}}\\
					4\Delta^{-1} & 1-4{b\Delta^{-1}}
			\end{bmatrix}}
			:
			b\in \mathbb{F}_q
			\right\},
			$$ we have 
			$$N_{\Gamma^-_q}[R_\Delta]=\mathscr{C}_1 \cup \mathscr{C}_2.
			$$ In particular, 
			$$
			|\operatorname{N}_{\Gamma_q^-}[R_\Delta]| = \tfrac{q-5}{4} + q.
			$$
		\end{lem}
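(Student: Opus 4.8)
The plan is to characterize adjacency to $R_\Delta$ in $\Gamma_q^-$ by a trace computation, exactly as in Lemma \ref{lem:NGR}, but with the added requirement that the relevant products land in the specific class $\mathcal{D}_\Delta$ rather than merely in $\mathcal{C}_p$. Since $k$ is even, the footnote to \eqref{eq:Deltadef} gives $\mathbb{F}_p^\times\subset(\mathbb{F}_q^\times)^2$, so every nonidentity power of $R_\Delta$ lies in $\mathcal{D}_\Delta$; hence a vertex $g\in\mathcal{D}_\Delta$ is adjacent to $R_\Delta$ if and only if $gR_\Delta^{-1}\in\mathcal{D}_\Delta$. Each $g\in\mathcal{D}_\Delta$ has a unipotent (trace-$2$) representative, so I would write $g=\begin{bmatrix} a & t\\ c & d\end{bmatrix}$ with $a+d=2$ and $ad-tc=1$. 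Right-multiplication by $R_\Delta^{-1}$ leaves the bottom-left entry unchanged, so $\tr(gR_\Delta^{-1})=2-c\Delta$; by Lemma \ref{lem:orderptra}, membership of $gR_\Delta^{-1}$ in $\mathcal{C}_p$ forces $2-c\Delta=\pm2$, that is $c=0$ or $c=4\Delta^{-1}$. I would treat these two cases separately, and the crux within each is to decide exactly when the order-$p$ element $gR_\Delta^{-1}$ actually lies in $\mathcal{D}_\Delta$.

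For the case $c=0$, the relations $a+d=2$ and $ad=1$ force $g=T_t$, and $g\in\mathcal{D}_\Delta$ means $t$ is a nonsquare, so $t=a^2\Delta$ with $a\neq0$. Then $gR_\Delta^{-1}=T_{(a^2-1)\Delta}$, which lies in $\mathcal{D}_\Delta$ precisely when $(a^2-1)\Delta$ is a nonzero nonsquare, i.e. when $a^2-1$ is a nonzero square; together with $g\neq R_\Delta$, this gives the condition $a\neq0,\pm1$ defining $\mathscr{C}_1$. For the case $c=4\Delta^{-1}$, solving $ad-tc=1$ with $a+d=2$ pins $g$ down to the one-parameter family $\mathscr{C}_2$ (writing $a=1+4b\Delta^{-1}$ yields $t=-4b^2\Delta^{-1}$). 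Here I would verify directly that every member of $\mathscr{C}_2$ belongs to $\mathcal{D}_\Delta$ and that $gR_\Delta^{-1}\in\mathcal{D}_\Delta$: since $\tr(gR_\Delta^{-1})=-2$, its unipotent representative is $-gR_\Delta^{-1}$, whose top-right entry computes to $\Delta^{-1}(\Delta+2b)^2$, a nonsquare (as $\Delta^{-1}$ is a nonsquare), so indeed $gR_\Delta^{-1}\in\mathcal{D}_\Delta$ for all $b\in\mathbb{F}_q$.

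I expect the delicate point to be the conjugacy-class bookkeeping: unlike in Lemma \ref{lem:NGR}, where for odd $k$ the fixers of a coset form the whole of $\mathcal{C}_p$, here one must distinguish $\mathcal{D}_\Delta$ from the other class, which amounts to tracking the square class of the top-right entry of the unipotent representative. The trace-$(-2)$ subcase is the subtle one, since one must pass to the negative lift before reading off the invariant, and the identity $\Delta^2+4b\Delta+4b^2=(\Delta+2b)^2$ is precisely what makes the entire family $\mathscr{C}_2$ adjacent. Finally, for the cardinality I would combine the counts: $\mathscr{C}_2$ has $q$ elements, while $\mathscr{C}_1$ has $\tfrac{q-5}{4}$, obtained from Lemma \ref{lem:square1} by discarding the three values $a\in\{0,1,-1\}$ (each making $a^2-1$ a square, using $-1\in(\mathbb{F}_q^\times)^2$ since $q\equiv1\pmod4$) from the $\tfrac{q+1}{2}$ admissible $a$, and then identifying $a$ with $-a$; as $\mathscr{C}_1$ and $\mathscr{C}_2$ are disjoint (their bottom-left entries $0$ and $4\Delta^{-1}$ differ), this yields $|N_{\Gamma_q^-}[R_\Delta]|=\tfrac{q-5}{4}+q$.
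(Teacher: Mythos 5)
Your argument follows the paper's proof in all essentials: both reduce adjacency to the trace condition $\tr(gR_\Delta^{-1})=2-c\Delta=\pm 2$ coming from the unchanged bottom-left entry, split into the cases $c=0$ and $c=4\Delta^{-1}$, and obtain the counts $\tfrac{q-5}{4}$ (via Lemma~\ref{lem:square1}, discarding $a\in\{0,1,-1\}$ and identifying $a$ with $-a$) and $q$. The one substantive difference is how membership of $gR_\Delta^{-1}$ (and of $g$ itself) in $\mathcal{D}_\Delta$ is certified in the second case: the paper writes $g R_\Delta^{-1}$ explicitly as a conjugate of $R_\Delta$ by $T_{xz^{-1}}$ composed with a fixed matrix, whereas you read off the class from the square class of the top-right entry of the trace-$2$ lift. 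That invariant is valid only when the top-right entry is nonzero, and it degenerates at exactly one parameter value: for $b=-\Delta/2$ the entry $\Delta^{-1}(\Delta+2b)^2$ of $-gR_\Delta^{-1}$ equals $0$, which is a square, not a nonsquare as you assert (likewise the top-right entry $-4b^2\Delta^{-1}$ of $W_b$ vanishes at $b=0$, so your direct verification that $\mathscr{C}_2\subset\mathcal{D}_\Delta$ has the same blind spot). In these degenerate cases the matrix is lower unitriangular with bottom-left entry $w$, and conjugating by the Weyl element shows its class is that of $T_{-w}$; since $-w=\pm 4\Delta^{-1}$ and $-1$ is a square (as $q\equiv 1\pmod 4$ because $k$ is even), $T_{-w}$ does lie in $\mathcal{D}_\Delta$, so the conclusion survives. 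With that single case patched your proof is complete and coincides with the paper's; everything else, including the disjointness of $\mathscr{C}_1$ and $\mathscr{C}_2$ and the final count $\tfrac{q-5}{4}+q$, is correct.
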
 
		\begin{proof}
			Let $g \in \mathcal{D}_{\Delta}$. We can write
			\begin{equation}\label{eq:fbfe45}
				g = {\begin{bmatrix} x &y \\ z & w \end{bmatrix}}
				{\begin{bmatrix} 1 & \Delta \\ 0 & 1 \end{bmatrix}}
				{\begin{bmatrix} x &y \\ z & w \end{bmatrix}}^{-1}
				=
				{\begin{bmatrix}
						1-{xz\Delta}&{\Delta x^2}\\
						-{\Delta z^2} & 1+{xz\Delta}
				\end{bmatrix}},
			\end{equation}
			where $x,y,z,w\in \mathbb{F}_q$ and $xw-yz=1$. 
			Then, $g$ is adjacent to $R_\Delta$ in $\Gamma_q^-$ if and only if
			\begin{align*}
				gR^{-1}_{\Delta}
				=
				{\begin{bmatrix}
						1-{xz\Delta}&*\\
						* & 1+{\Delta^2 z^2+xz\Delta}
				\end{bmatrix}} \in \mathcal{D}_{\Delta};
			\end{align*}
			in particular, by Lemma~\ref{lem:power-of-5-order-5}, we have 
			\begin{align}
				2+{\Delta^2 z^2} = \pm 2,
			\end{align}
			that is, either $z = 0$ or $\Delta^2 z^2 = -4$. Let us distinguish these two cases.
			\begin{enumerate}
				\item {\bf Case $z=0$.} By \eqref{eq:fbfe45}, we have \begin{align*}
					g = 
					{\begin{bmatrix}
							1&\Delta x^2\\
							0 & 1
					\end{bmatrix}},
				\end{align*} 
				with $x \notin \{-1,0,1\}$. Note that
				\begin{align*}
					gR_\Delta^{-1}
					=
					{\begin{bmatrix}
							1&\Delta (x^2-1)\\
							0 & 1
					\end{bmatrix}}\in \mathcal{D}_{\Delta}
				\end{align*}
				if and only if $x^2-1$ is a square.
				By Lemma~\ref{lem:square1}, there are exactly $\tfrac{q+1}{2}-3 = \tfrac{q-5}{2}$ choices\footnote{Here, the number $3$ accounts for the fact that $x^2-1$ is a square for any $x\in \{0,1,-1\}$.} for $x$. Since $a^2-1=b^2-1$ if and only if $a=\pm b$, there are $\tfrac{q-5}{4}$ distinct values of $x^2-1$ that are square.
				\item {\bf Case $\Delta^2z^2=-4$.}  By \eqref{eq:fbfe45}, we have
				\begin{align*}
					g 
					=
					{\begin{bmatrix}
							1+4{xz^{-1}\Delta^{-1}}&-4{ x^2z^{-2}\Delta^{-1}}\\
							4\Delta^{-1} & 1-4{xz^{-1}\Delta^{-1}}
					\end{bmatrix}}.
				\end{align*}
				Note that
				$$
				g R_\Delta^{-1} =  T_{xz^{-1}} {\begin{bmatrix} 1 & -\Delta \\ 2 \Delta^{-1} & -1  \end{bmatrix}} R_\Delta
				{\begin{bmatrix} 1 & -\Delta \\ 2 \Delta^{-1} & -1  \end{bmatrix}}^{-1} T_{xz^{-1}}^{-1} \in \mathcal{D}_\Delta.
				$$
				Since there are $q$ choices for $xz^{-1}$ in $\mathbb{F}_q$, there are $q$ different possibilities for $g$. 
			\end{enumerate}
		\end{proof}
		
		Next, we determine the structure of $\widetilde{\Gamma}_q^- = \Gamma_q^-[\operatorname{N}_{\Gamma_{q}^-}(R_\Delta)]$.
		\begin{lem}
			The graph $\widetilde{\Gamma}_q^-$ is the disjoint union of $\Gamma_q^-[\mathscr{C}_1]$ and $\Gamma_q^-[\mathscr{C}_2]$. Moreover, $\Gamma_q^-[\mathscr{C}_2]$ is the disjoint union of $q/p$ cycles of length $p$ when $q \equiv 1 \pmod{4}$, and is edge-free when $q \equiv 3 \pmod{4}$.
		\end{lem}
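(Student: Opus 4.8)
The plan is to follow the proof of Lemma~\ref{lem:cliqnumtildegamm} almost verbatim, the only genuinely new point being that the vertices of $\Gamma_q^-$ form the single conjugacy class $\mathcal{D}_\Delta$ rather than all of $\mathcal{C}_p$. Write $W_b = {\begin{bmatrix} 1+4b\Delta^{-1} & -4b^2\Delta^{-1}\\ 4\Delta^{-1} & 1-4b\Delta^{-1}\end{bmatrix}}$ for the generic element of $\mathscr{C}_2$. As recorded at the end of the previous lemma, $W_b = T_b W_0 T_b^{-1}$ with $W_0 = {\begin{bmatrix} 1 & 0\\ 4\Delta^{-1} & 1\end{bmatrix}}$, so $b\mapsto W_b$ is a bijection $\F_q\to\mathscr{C}_2$ and conjugation by $T_a\in\operatorname{C}$ sends $W_b$ to $W_{a+b}$. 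Since conjugation by $T_a\in G_q$ is an automorphism of $\Gamma_q^-$ (Lemma~\ref{lem:gqtrangam}) preserving $\mathscr{C}_2$ and acting regularly on it, $\Gamma_q^-[\mathscr{C}_2]$ is a Cayley graph on $(\F_q,+)$; in particular $W_x$ and $W_y$ are adjacent if and only if $x-y$ lies in the connection set $S = \{d\in\F_q^\times : W_d W_0^{-1}\in\mathcal{D}_\Delta\}$.

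For the disjoint-union assertion I would rule out edges between $\mathscr{C}_1$ and $\mathscr{C}_2$. For $u = {\begin{bmatrix} 1 & a^2\Delta\\ 0 & 1\end{bmatrix}}\in\mathscr{C}_1$ (recall $a\notin\{0,\pm1\}$) and any $W_b\in\mathscr{C}_2$, a direct multiplication gives $\tr(uW_b^{-1}) = 2-4a^2$, which equals $\pm2$ only when $a\in\{0,\pm1\}$. By Lemma~\ref{lem:power-of-5-order-5} this forces $uW_b^{-1}\notin\mathcal{C}_p\supseteq\mathcal{D}_\Delta$, so $u$ and $W_b$ are non-adjacent. This is the exact analogue of the corresponding step in Lemma~\ref{lem:cliqnumtildegamm} and should be routine.

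To compute $S$, I would use $\tr(W_x W_y^{-1}) = 2 + 16(x-y)^2\Delta^{-2}$, obtained by the same matrix multiplication as in Lemma~\ref{lem:cliqnumtildegamm}. Setting $d=x-y$ and requiring the trace to be $\pm2$, the value $+2$ gives $d=0$, while $-2$ gives $16d^2\Delta^{-2}=-4$, i.e.\ $d = \pm\tfrac{\Delta}{2}\sqrt{-1}$. This has a solution in $\F_q$ precisely when $\sqrt{-1}\in\F_q$, that is when $q\equiv1\pmod4$; otherwise $S=\varnothing$ and $\Gamma_q^-[\mathscr{C}_2]$ is edge-free. When $q\equiv1\pmod4$ the set $S=\{\pm\tfrac{\Delta}{2}\sqrt{-1}\}$ is symmetric and nonzero, and since $(\F_q,+)$ is elementary abelian of exponent $p$ the element $\tfrac{\Delta}{2}\sqrt{-1}$ generates a subgroup of order $p$; hence the $\{g,-g\}$-Cayley graph decomposes into its $q/p$ cosets, each inducing a cycle of length $p$.

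The step with no counterpart in Lemma~\ref{lem:cliqnumtildegamm}, and the one I expect to require the most care, is verifying that the trace-$(-2)$ elements $W_dW_0^{-1}$ genuinely belong to $\mathcal{D}_\Delta$ and not to the sibling class $\mathcal{D}$ of $R$; in the odd-$k$ setting this never arises because the vertex set there is all of $\mathcal{C}_p$. I would handle it with the conjugacy invariant coming from the conjugation formula \eqref{eq:fbfe45}: conjugating ${\begin{bmatrix}1&c\\0&1\end{bmatrix}}$ by ${\begin{bmatrix}x&y\\z&w\end{bmatrix}}\in\sln{2}{q}$ produces upper-right entry $x^2 c$, so any order-$p$ element of trace $2$ with nonzero upper-right entry $\beta$ is conjugate to ${\begin{bmatrix}1 & \beta x^{-2}\\0&1\end{bmatrix}}$ and therefore lies in $\mathcal{D}_\Delta$ if and only if $\beta$ is a nonsquare. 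Evaluating $W_dW_0^{-1}$ at $d^2=-\Delta^2/4$ gives upper-right entry $\Delta$, and its trace-$2$ representative $-W_dW_0^{-1}$ has upper-right entry $-\Delta$; since $k$ is even we have $q\equiv1\pmod4$, so $-1$ is a square and both $\Delta$ and $-\Delta$ are nonsquares, confirming $W_dW_0^{-1}\in\mathcal{D}_\Delta$. Finally, because an odd square satisfies $q\equiv1\pmod8$, the case $q\equiv3\pmod4$ cannot occur for even $k$; it is listed only for parallelism with Lemma~\ref{lem:cliqnumtildegamm}, so in our situation $\Gamma_q^-[\mathscr{C}_2]$ is always the disjoint union of $q/p$ cycles of length $p$.
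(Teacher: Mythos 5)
Your proposal is correct and follows the paper's skeleton --- ruling out edges between $\mathscr{C}_1$ and $\mathscr{C}_2$ via $\tr(W_b T_{a^2\Delta}^{-1})=2-4a^2\neq\pm2$, reducing adjacency inside $\mathscr{C}_2$ to the condition $b-b'=\pm 2^{-1}\sqrt{-1}\,\Delta$, and decomposing $\mathbb{F}_q$ into $q/p$ cosets each inducing a $p$-cycle --- but it diverges at the one genuinely delicate step, namely verifying that the trace-$(-2)$ products $W_bW_{b'}^{-1}$ lie in $\mathcal{D}_\Delta$ rather than in the sibling class of $R$. The paper settles this by exhibiting an explicit determinant-one matrix $V$ with $W_bW_{b'}^{-1}=T_{b'}VR_\Delta V^{-1}T_{b'}^{-1}$, whereas you invoke the square-class invariant of unipotent elements: by the conjugation formula \eqref{eq:fbfe45}, the upper-right entry of a trace-$2$ unipotent matrix, when nonzero, determines its $\sln{2}{q}$-class modulo squares, and since $k$ even forces $q\equiv 1\pmod 8$, the entry $-\Delta$ of $-W_dW_0^{-1}$ is a nonsquare, placing the element in $\mathcal{D}_\Delta$. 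Both verifications are valid; yours is arguably more transparent (no conjugating matrix to guess, and it makes explicit why the claim could fail a priori), while the paper's is a self-contained computation. Your further observations --- that $\Gamma_q^-[\mathscr{C}_2]$ is a Cayley graph on $(\mathbb{F}_q,+)$ via the regular conjugation action of $\operatorname{C}$, and that the case $q\equiv 3\pmod 4$ is vacuous for even $k$ --- are correct and mildly sharpen the statement without being needed for it.
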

		\begin{proof}
			For simplicity, for $b \in \mathbb{F}_q$, let
			$$
			W_b= {\begin{bmatrix}
					1+4{b\Delta^{-1}}&-4{ b^2\Delta^{-1}}\\
					4\Delta^{-1} & 1-4{b\Delta^{-1}}
			\end{bmatrix}}.
			$$
			
			First, we claim that there are no edges between $\mathscr{C}_1$ and $\mathscr{C}_2$. Indeed, for any $a \in \mathbb{F}_q \setminus \{0,1,-1\}$ and any $b\in \mathbb{F}_q$, we have
			$$
			\tr(W_b T_{a^2 \Delta}^{-1})=2-4a^2 \neq \pm 2,
			$$ so by Lemma \ref{lem:orderptra}, $W_b$ and $T_{a^2\Delta}$ are not adjacent.
			
			Next, consider the subgraph $\Gamma_q^-[\mathscr{C}_2]$. For distinct $b,b' \in \F_q$, a necessary condition for $W_b W_{b'}^{-}$ to be in $\mathcal{D}_\Delta$ is that
			$$
			\tr(W_b W_{b'}^{-1})=2+16\Delta^{-2}(b-b')^2=\pm 2,
			$$
			that is, $b-b'=\pm \sqrt{-1} \cdot 2^{-1}\Delta$.
			\vskip 1mm
			\noindent
			{\bf -Case 1: $q \equiv 1 \pmod{4}$.} In this case, $\sqrt{-1}$ belongs to $\mathbb{F}_q$. Let $\{a_{1},\dots,a_{q/p}\}$ be a complete set of representatives of $\mathbb{F}_q$ modulo $\mathbb{F}_p \cdot \sqrt{-1} \Delta$. For $1 \leq i \leq q/p$, define
			$$
			\mathscr{C}_2^{(i)}=\{ W_b : b \in a_i + \mathbb{F}_p \cdot \sqrt{-1} \Delta\}.
			$$
			Note that for two distinct $1 \leq i,j \leq q/p$, there are no edges between $\Gamma_q^{-}[\mathscr{C}_2^{(i)}]$ and $\Gamma_q^{-}[\mathscr{C}_2^{(j)}]$.
			
			Fix $1 \leq i \leq q/p$. We are going to prove that $\Gamma_q^{-}[\mathscr{C}_2^{(i)}]$ is actually a $p$-cycle. For that, let $b,b' \in a_i+ \mathbb{F}_p \cdot \sqrt{-1} \Delta$ be such that $b-b'= \varepsilon\sqrt{-1}\cdot 2^{-1} \Delta$ for some $\varepsilon \in \{-1,1\}$. Letting 
			$$V={\begin{bmatrix} \sqrt{-1} & (\varepsilon - \sqrt{-1}) \Delta\\
					-2(\sqrt{-1}+\varepsilon)\Delta^{-1} & 3 \sqrt{-1}\end{bmatrix}},$$ we have
			$$
			W_b W_{b'}^{-1}= T_{b'} V R_\Delta V^{-1} T_{b'}^{-1} \in \mathcal{D}_{\Delta}.
			$$ 
			
			\noindent{\bf -Case 2: $q \equiv 3 \pmod{4}$.} Since $\sqrt{-1}\not \in \mathbb{F}_q$, it follows that $b-b'=\pm \sqrt{-1} \cdot 2^{-1}\Delta$ is impossible. Consequently, $\Gamma_q^-[\mathscr{C}_2]$ is edge-free.
			
			This completes the proof.
		\end{proof}
		
		Finally, we compute $\omega(\widetilde{\Gamma}_q^-)$.
		\begin{lem}\label{lem:tildgammcliqnum}
			We have $\omega(\Gamma_q^-[\mathscr{C}_1]) = \sqrt{q}-2$. As a consequence,
			$$
			\omega(\widetilde{\Gamma}^-_q)=\sqrt{q}-2.
			$$
		\end{lem}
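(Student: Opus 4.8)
The plan is to recognize $\Gamma_q^-[\mathscr{C}_1]$ as an induced subgraph of a Paley graph and then to read off its clique number from the (classical) clique number of Paley graphs of square order. First I would pin down the adjacency relation inside $\mathscr{C}_1$. Recall from the $z=0$ analysis of the preceding lemma that, for $t\in\mathbb{F}_q$, one has $T_{\Delta t}\in\mathcal{D}_\Delta$ exactly when $t$ is a nonzero square (equivalently, $\Delta t$ is a nonsquare), since the class $\mathcal{D}_\Delta$ of $R_\Delta=T_\Delta$ consists of those $T_c$ with $c/\Delta$ a square. Writing the generic vertex of $\mathscr{C}_1$ as $T_{\Delta s}$ with $s=a^2$, two vertices $T_{\Delta s},T_{\Delta s'}$ are adjacent in $\Gamma_q^-$ precisely when $T_{\Delta s}T_{\Delta s'}^{-1}=T_{\Delta(s-s')}\in\mathcal{D}_\Delta$, i.e. when $s-s'$ is a nonzero square. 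Because $k$ is even we have $q=p^k\equiv 1\pmod 4$, so $-1$ is a square and this relation is symmetric. Hence $T_{\Delta s}\mapsto s$ identifies $\Gamma_q^-[\mathscr{C}_1]$ with the induced subgraph $P_q[S]$ of the Paley graph $P_q$ (on $\mathbb{F}_q$, with $u\sim v$ iff $u-v$ is a nonzero square), where $S=\{\,s\in\mathbb{F}_q : s\ \text{and}\ s-1\ \text{are nonzero squares},\ s\neq 1\,\}$.

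For the lower bound I would use the subfield $\mathbb{F}_{\sqrt q}=\mathbb{F}_{p^{k/2}}\subseteq\mathbb{F}_q$, which exists since $k$ is even. Every nonzero element of $\mathbb{F}_{\sqrt q}$ is a square in $\mathbb{F}_q$: indeed $\tfrac{q-1}{2}=(\sqrt q-1)\cdot\tfrac{\sqrt q+1}{2}$ is a multiple of $\sqrt q-1$ (note $\tfrac{\sqrt q+1}{2}\in\mathbb{Z}$ as $\sqrt q=p^{k/2}$ is odd), so $c^{(q-1)/2}=1$ for $c\in\mathbb{F}_{\sqrt q}^\times$. Consequently, for each $s\in\mathbb{F}_{\sqrt q}\setminus\{0,1\}$ both $s$ and $s-1$ lie in $\mathbb{F}_{\sqrt q}^\times$, hence are squares in $\mathbb{F}_q$, so $s\in S$; and for distinct $s,s'$ in this set, $s-s'\in\mathbb{F}_{\sqrt q}^\times$ is a square. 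Thus $\mathbb{F}_{\sqrt q}\setminus\{0,1\}$ is a clique of $P_q[S]$ of size $\sqrt q-2$, giving $\omega(\Gamma_q^-[\mathscr{C}_1])\geq\sqrt q-2$.

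For the upper bound I would use a short padding trick. Let $C=\{s_1,\dots,s_m\}\subseteq S$ be a clique of $P_q[S]$. Then $0,1,s_1,\dots,s_m$ are pairwise distinct (as each $s_i\notin\{0,1\}$), and all their pairwise differences are nonzero squares: the $s_i-s_j$ are squares because $C$ is a clique, while $s_i=s_i-0$ and $s_i-1$ are squares by membership in $S$, and $1-0=1$ is a square. Hence $\{0,1\}\cup C$ is a clique of the full Paley graph $P_q$, so $m+2\leq\omega(P_q)$. Invoking the classical fact that the Paley graph of square order has clique number $\omega(P_q)=\sqrt q$ — which also follows from the Hoffman ratio bound applied to the strongly regular graph $P_q$, whose least eigenvalue is $\tfrac{-1-\sqrt q}{2}$ — yields $m\leq\sqrt q-2$, and with the lower bound $\omega(\Gamma_q^-[\mathscr{C}_1])=\sqrt q-2$. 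Finally, since $\widetilde{\Gamma}_q^-$ is the disjoint union of $\Gamma_q^-[\mathscr{C}_1]$ and $\Gamma_q^-[\mathscr{C}_2]$, and $\Gamma_q^-[\mathscr{C}_2]$ is a disjoint union of $p$-cycles (here $q\equiv 1\pmod 4$), which has clique number at most $2$, every clique of size at least $2$ lies in a single part; as $\sqrt q-2\geq p-2\geq 3>2$ when $p\geq 5$ and $k\geq 2$, we conclude $\omega(\widetilde{\Gamma}_q^-)=\sqrt q-2$.

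The only nonelementary input is the determination $\omega(P_q)=\sqrt q$; the rest is the elementary translation of adjacency into a square/nonsquare condition together with the $\{0,1\}$-padding. The one step requiring genuine care is verifying that the subfield clique actually lands in $S$, i.e. that both $s$ and $s-1$ are squares for $s\in\mathbb{F}_{\sqrt q}\setminus\{0,1\}$, and this is exactly where the evenness of $k$ (the existence of $\mathbb{F}_{\sqrt q}$ and the fact that its nonzero elements are squares in $\mathbb{F}_q$) is indispensable.
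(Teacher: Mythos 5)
Your proposal is correct and follows essentially the same route as the paper: identify $\Gamma_q^-[\mathscr{C}_1]$ with an induced subgraph of the Paley graph $P(q)$, obtain the lower bound $\sqrt{q}-2$ from the subfield clique $\mathbb{F}_{\sqrt q}\setminus\{0,1\}$, and obtain the matching upper bound by padding any clique with $\{0,1\}$ and invoking $\omega(P(q))=\sqrt q$ for square $q$. Your write-up is in fact slightly more careful than the paper's (explicitly checking the adjacency translation, the symmetry via $q\equiv 1\pmod 4$, and the $\mathscr{C}_2$ component of the final claim), but there is no substantive difference in method.
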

		\begin{proof}
			Clearly, $\Gamma_q^-[\mathscr{C}_1]$ can be redefined as the graph whose vertex set is
			\begin{align*}
				\left\{ a^2\in \left(\mathbb{F}_q\right)^2: a^2-1\in \left(\mathbb{F}_q\right)^2  \right\}, 
			\end{align*} 
			where two vertices $a^2$ and $b^2$ are adjacent if and only if $a^2-b^2\in \left(\mathbb{F}_q\right)^2$. Thus, the graph $\Gamma_q^-[\mathscr{C}_1]$ is a subgraph of the Paley graph $P(q)$. 
			By \cite[Corollary 2]{BDR88}, the clique number of $P(q)$ is $\sqrt{q}$. Define the graph with vertex set
			$$
			W=\{ u \in (\mathbb{F}_q)^2 : u-1 \in (\mathbb{F}_q)^2 \},
			$$ where two vertices $u,v \in W$ are adjacent if and only if $u-v \in (\mathbb{F}_q)^2$. 
			Since every element in $\mathbb{F}_{\sqrt{q}}$ is a square in $\mathbb{F}_q$, we have $\mathbb{F}_{\sqrt{q}} \subset W$. Therefore, the clique number of $W$ is $\sqrt{q}$. 
			As $W=V \setminus \{0,1\}$, the clique number of $\Gamma_q^-[\mathscr{C}_1]$ is between $\sqrt{q}-2$ and $\sqrt{q}$. It remains to verify that this clique number is exactly $\sqrt{q}-2$. To prove this, let $S \subset \Gamma_q^-[\mathscr{C}_1]$ be a clique. Clearly, $S \cup \{0,1\}$ forms a clique in $W$, so $|S|+2 \leq \sqrt{q}$, which implies $|S|\leq \sqrt{q}-2$. This completes the proof.
		\end{proof}
		
		\section{Point stabilizers of prime order dividing $\tfrac{q\pm 1}{2}$}\label{sect:density-r}
		In this section, we consider the case where $G_q = \psl{2}{q}$ is a transitive permutation group with stabilizer $H$, which is a cyclic group of order equal to an odd prime $r$ dividing $\frac{q\pm 1}{2}$. In this case, there exists a unique conjugacy class of subgroups of order $r$ in $G_q$. We denote a representative of this conjugacy class by $H_r^+$ (resp., $H_r^-$) if $r\mid \tfrac{q+1}{2}$ (resp., if $r\mid \tfrac{q-1}{2}$). For $r\geq 5$, there are $\tfrac{r\pm 1}{2}$ conjugacy classes of elements of order $r$, and thus making these cases more intricate. The first subconstituent of the complement of $G_q$ is no longer vertex transitive, and is in fact not even regular. 
		
		For $H = H^-_r$, we will construct an example of large intersecting set which sometimes realize the intersection density of $\psl{2}{q}$ acting on cosets of $H = H^-_r$. We state the main result below.
		\begin{thm}
			If $q$ is a power of a prime and $r\mid \tfrac{q-1}{2}$ is an odd prime, then the intersection density of  $G_q$ acting on cosets of $H_r^-$ is at least $\tfrac{3r-1}{2r}$.\label{thm:main2}
		\end{thm}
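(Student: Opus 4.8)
The plan is to follow the same strategy as in Section~\ref{sect:order-p}: reduce the statement to producing a large clique in the auxiliary graph on order-$r$ elements, and then exhibit one geometrically using three split tori in ``triangle'' position. First I would record the dictionary between derangements and element orders. Since $r \mid \tfrac{q-1}{2} = |T|$ for a split maximal torus $T$, and since there is a unique conjugacy class of subgroups of order $r$ in $G_q$, every order-$r$ element is conjugate into $H_r^-$, whereas no element of order $p$ (trace $\pm 2$) nor of order dividing $\tfrac{q+1}{2}$ is. Hence $x \in G_q$ fixes a coset of $H_r^-$ if and only if $x = \overline{I}$ or $x$ has order $r$. Consequently, for a set $\mathcal{F}$ of order-$r$ elements, $\{\overline{I}\}\cup \mathcal{F}$ is intersecting precisely when $g g'^{-1}$ is trivial or of order $r$ for all $g,g'\in\mathcal{F}$; and then $\rho(G_q)\geq (1+|\mathcal{F}|)/r$. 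So it suffices to build $\tfrac{3(r-1)}{2}$ pairwise-compatible order-$r$ elements.

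Fix a primitive $r$-th root of unity $\zeta\in\mathbb{F}_q$. Each order-$r$ element is semisimple with a pair of fixed points $\{P,Q\}\subset \pg{1}{q}$ and eigenvalues $\zeta^{\pm i}$ there. The heart of the plan is an adjacency lemma: if $g,g'$ have order $r$, then $gg'^{-1}$ has order $r$ (so $g,g'$ are compatible) in either of the cases \emph{(i)} $g,g'$ lie in a common split torus and $g\neq g'$; \emph{(ii)} $g,g'$ share exactly one fixed point $P$ with \emph{different} eigenvalues at $P$; whereas if they share $P$ with \emph{equal} eigenvalue at $P$ (and $g\neq g'$), then $gg'^{-1}$ lies in the unipotent radical of the Borel $\stab{G_q}{P}$, hence has order $p$ and is a derangement. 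Case (i) is immediate because the order-$r$ elements of a split torus form a cyclic group of order $r$. Case (ii) and its converse follow by considering the homomorphism ``eigenvalue at $P$'' on $\stab{G_q}{P}$, whose kernel is the unipotent radical: $gg'^{-1}$ is semisimple with eigenvalues in the group of $r$-th roots of unity (hence of order $r$) exactly when the two eigenvalues at $P$ differ.

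Now the construction. Choose three distinct points $A,B,C\in \pg{1}{q}$ (possible since $q+1>3$), and let $H_{AB},H_{BC},H_{CA}$ be the order-$r$ subgroups of the split tori fixing the respective pairs. Index their nontrivial elements by the eigenvalue exponent at the first listed point: write $\alpha_i\in H_{AB}$ (eigenvalue $\zeta^i$ at $A$), $\beta_j\in H_{BC}$ (eigenvalue $\zeta^j$ at $B$), $\gamma_k\in H_{CA}$ (eigenvalue $\zeta^k$ at $C$), for $i,j,k\in\{1,\dots,r-1\}$. Any two of the three tori share exactly one vertex of the triangle, so every cross-pair falls under case (ii): $\alpha_i,\beta_j$ share $B$, where their eigenvalues are $\zeta^{-i}$ and $\zeta^{j}$, so they are compatible iff $i+j\not\equiv 0\pmod r$; likewise $\beta_j\sim\gamma_k$ iff $j+k\not\equiv 0$, and $\gamma_k\sim\alpha_i$ iff $k+i\not\equiv 0$. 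Taking $I=J=K=\{1,\dots,\tfrac{r-1}{2}\}$ and
$$
\mathcal{F}=\{\alpha_i:i\in I\}\cup\{\beta_j:j\in J\}\cup\{\gamma_k:k\in K\},
$$
every relevant sum lies in $\{2,\dots,r-1\}$, so all cross-pairs are compatible; within-torus pairs are compatible by case (i); and the $\tfrac{3(r-1)}{2}$ elements are distinct, since an element lying in two of the tori would fix three points and hence be trivial. Therefore $\{\overline{I}\}\cup\mathcal{F}$ is intersecting of size $\tfrac{3(r-1)}{2}+1=\tfrac{3r-1}{2}$, giving $\rho(G_q)\geq \tfrac{3r-1}{2r}$.

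The step I expect to be the main obstacle is case (ii) of the adjacency lemma, namely checking cleanly that sharing a fixed point with equal eigenvalue pushes the quotient into the order-$p$ unipotent radical while unequal eigenvalues produce a split order-$r$ element; an explicit lower-triangular model of $\stab{G_q}{P}$ makes this routine. The combinatorial optimization is then transparent and also explains the exponent: the three constraints $J\cap(-I)=\emptyset$, $K\cap(-J)=\emptyset$, $I\cap(-K)=\emptyset$ force $|I|+|J|+|K|\leq \tfrac{3(r-1)}{2}$ upon summing, with equality attained by $I=J=K=\{1,\dots,\tfrac{r-1}{2}\}$; this is precisely why the triangle construction caps at $\tfrac{3r-1}{2}$ and is only \emph{sometimes} of maximum size, matching the computational data in Appendix~\ref{sect:data}.
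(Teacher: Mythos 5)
Your construction is exactly the one in the paper: your three tori through $A,B,C$ with exponents in $\{1,\dots,\tfrac{r-1}{2}\}$ coincide (for $A=[1:0]$, $B=[0:1]$, $C=[1:1]$) with the explicit families $\mathcal{F}_1,\mathcal{F}_2,\mathcal{F}_3$ of Section~\ref{sect:density-r}, so the proposal is correct and takes essentially the same approach. Your adjacency lemma via eigenvalues at a shared fixed point is a clean way of supplying the verification the paper leaves as ``straightforward.''
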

		
		As shown in Table~\ref{table}, the lower bound given in Theorem~\ref{thm:main2} is tight for many cases.

		Before proceeding with the construction, we first determine the elements that are conjugate to elements in $H_r^-$. Let $q = p^k$ where $p$ is an odd prime and $k\geq 1$ is an integer. Fix a primitive element $\omega\in \mathbb{F}_q$. 
		Since $H_r^-\leq G_q$, we know that $2r \mid (q-1)$. Let $a\in \mathbb{F}_q^*$ be an element of order $2r$. Define the element of $G_q$ given by
		\begin{align*}
			R 
			=
			{
				\begin{bmatrix}
					a & 0\\
					0 & a^{-1}
			\end{bmatrix}}.
		\end{align*}
		Clearly, the order of $R \in G_q$ is equal to $r$.
		For any $1\leq i\leq \tfrac{r-1}{2}$, we also define
		\begin{align*}
			\delta_i := a^i+a^{-i}.
		\end{align*}
		Moreover, we also define $\delta_0 = 2.$
		
		As $r\mid \tfrac{q-1}{2}$, there is a unique conjugacy class of subgroups isomorphic to $H_r^-$ (see \cite{king2005subgroup}). Hence, without loss of generality, we assume that $H_r^- = \langle R\rangle$.  The elements of order $r$ in $G_q$ are therefore all conjugate in $H_r^-$. Further, there are $\tfrac{r-1}{2}$ conjugacy classes of elements of order $r$ in $G_q$ represented by $R^i$ for $1\leq i\leq \tfrac{r-1}{2}$. For any $1\leq i\leq \tfrac{r-1}{2}$, we let $\mathcal{C}_{i}$ be the conjugacy class of $G_q$ containing $R^i$.
		Let $\Gamma_q^-$ be the subgraph of $\overline{\Gamma_{G_q}}$ induced by
		\begin{align*}
			\mathcal{D} = \bigcup_{i=1}^{\tfrac{r-1}{2}}\mathcal{C}_{i}.
		\end{align*} 
		The next lemma gives a characterization of the elements of $G_q$ that lie in $\mathcal{D}$. We omit the proof since it is similar to that of Lemma~\ref{lem:power-of-5-order-5}.
		
		\begin{lem}
			An element of $G_q$ belongs to $\mathcal{D}$ if and only if its trace is $\pm\delta_i$ for some $1\leq i\leq \tfrac{r-1}{2}$.	
		\end{lem}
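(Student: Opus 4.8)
The plan is to follow the template of Lemma~\ref{lem:power-of-5-order-5}, replacing the trace condition $\pm 2$ that detects order $p$ by the condition $\pm\delta_i$ that detects order $r$. Since, by construction, $\mathcal{D}$ is exactly the set of all elements of order $r$ in $G_q$ (each being $G_q$-conjugate to one of the representatives $R^1,\dots,R^{(r-1)/2}$), it suffices to show that a nontrivial $\overline{A}\in G_q$ has order $r$ if and only if $\tr(A)\in\{\pm\delta_1,\dots,\pm\delta_{(r-1)/2}\}$ for a lift $A\in\sln{2}{q}$. I would first record that this condition is well-posed: the two lifts $A$ and $-A$ of $\overline{A}$ have opposite traces, and the target set is stable under negation, so membership is independent of the chosen lift.

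For the forward direction, if $\overline{A}\in\mathcal{D}$ then $\overline{A}$ is $G_q$-conjugate to some $R^i$ with $1\le i\le\tfrac{r-1}{2}$; as the trace is conjugation-invariant up to a global sign in $\psl{2}{q}$ and $\tr(R^i)=a^i+a^{-i}=\delta_i$, we obtain $\tr(A)=\pm\delta_i$. For the converse, suppose $\tr(A)=\pm\delta_i$. Then $\delta_i\neq\pm 2$ (otherwise $a^i=\pm 1$, which is impossible for $1\le i\le\tfrac{r-1}{2}$ since $a$ has order $2r$), so the characteristic polynomial $t^2\mp\delta_i t+1=(t-a^{\pm i})(t-a^{\mp i})$ splits over $\F_q$ with two distinct roots. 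Hence $A$ is diagonalizable over $\F_q$ and, up to the scalar $\pm1$, is conjugate in $\gl{2}{q}$ to $\mathrm{diag}(a^{i},a^{-i})$, a lift of $R^i$. Since $a$ has order $2r$, the lift satisfies $\mathrm{diag}(a,a^{-1})^{r}=-I$, so $R$, and therefore $R^i$ and $\overline{A}$, have order exactly $r$; thus $\overline{A}\in\mathcal{D}$.

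The one step that needs care—and the structural heart of the argument—is the bookkeeping that normalizes every admissible trace into the form $\pm\delta_i$ with $1\le i\le\tfrac{r-1}{2}$. An eigenvalue $\lambda$ of an order-$r$ element satisfies $\lambda^{2r}=1$ and $\lambda\neq\pm1$; because $2r\mid q-1$, every $(2r)$-th root of unity already lies in $\F_q^\times=\langle\omega\rangle$, so $\lambda=a^{j}$ for some $j\not\equiv 0,r\pmod{2r}$ and $\tr(A)=a^{j}+a^{-j}$. Using $\delta_{-j}=\delta_j$ together with $a^{r}=-1$, which gives $\delta_{j+r}=-\delta_j$, one reflects and shifts $j$ into the range $1\le j\le\tfrac{r-1}{2}$ at the cost of at most one sign, landing in $\{\pm\delta_i\}$. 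This is also where $r\neq p$ is used: a non-semisimple element of $\sln{2}{q}$ is unipotent up to sign and therefore of order $p\neq r$, so order-$r$ elements are automatically diagonalizable and the analysis above is unconditional. I expect this sign-and-range normalization to be the only genuinely fiddly point; everything else transcribes the order-$p$ argument almost verbatim.
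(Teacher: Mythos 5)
Your proof is correct, and it is essentially the argument the paper has in mind: the paper omits the proof of this lemma, stating only that it is analogous to the order-$p$ trace criterion of Lemma~\ref{lem:power-of-5-order-5}, and your write-up is precisely that analogue (conjugation-invariance of the trace up to sign for necessity; factoring the characteristic polynomial $t^2 \mp \delta_i t + 1$ into distinct linear factors and diagonalizing for sufficiency), with the sign/range normalization of the exponent $j$ carried out correctly via $\delta_{-j}=\delta_j$ and $\delta_{j+r}=-\delta_j$.
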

		
		Recall that $\Gamma_q^-$ is the graph defined in \eqref{Gamma-} in \S\ref{ss:prprointeeee}. Since the graph $\Gamma_q^-$ is clearly the union of $\tfrac{r-1}{2}$ conjugacy classes of $G_q$, the group $G_q$ acts intransitively as an automorphism group of $\Gamma_q^-$. Similarly, $G_q^*$ acts intransitively as a subgroup of automorphism of $\Gamma_{q}^-.$ In fact, the graph $\Gamma_q^-$ is not even regular in general, except for the case where $r = 3$, in which case the vertex set of $\Gamma_q^-$ is the conjugacy classes of elements of order $3$. In order to prove Theorem~\ref{thm:main2}, we will find a clique of the desired size $\tfrac{3(r-1)}{2}$ in $\Gamma_q^-.$
		
		We define the sets 
		\begin{align*}
			\mathcal{F}_1 &:= \left\{ R^{-i}: 1\leq i\leq \tfrac{r-1}{2}
			\right\},\\
			\mathcal{F}_2 &:=\left\{
			{\begin{bmatrix}	a^{i}  &0 \\ 	\left(a^{i}-a^{-i}\right) & a^{-i}\end{bmatrix}}:
			1\leq i\leq \tfrac{r-1}{2}
			\right\}
			\mbox{, and }\\
			\mathcal{F}_3 &:=
			\left\{
			{\begin{bmatrix}	a^{i}  &-\left(a^{i}-a^{-i}\right) \\ 0 & a^{-i}\end{bmatrix}}:
			1\leq i\leq \tfrac{r-1}{2}
			\right\}.
		\end{align*}
		Moreover, let $\mathcal{F} = \mathcal{F}_1 \cup \mathcal{F}_2\cup \mathcal{F}_3$. It is straightforward to show the next lemma.
		\begin{lem}
			The set $\mathcal{F}$ is a clique in ${\Gamma}_q^-$. In particular, $|\mathcal{F}| = \tfrac{3(r-1)}{2}$, and  the intersection density of  $G_q$ acting on cosets of $H_r^-$ is at least $\tfrac{3r-1}{2r}$.
		\end{lem}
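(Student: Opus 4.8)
The plan is to prove the lemma by exhibiting $\mathcal{F}$ directly as a clique of the asserted size and then reading off the density bound. Throughout I will use the characterization from the preceding lemma: a nonidentity element of $G_q$ lies in $\mathcal{D}$ precisely when its trace equals $\pm\delta_\ell$ for some $1\leq \ell\leq \tfrac{r-1}{2}$. I will also repeatedly invoke the two elementary facts $a^{2r}=1$ and $a^r=-1$, which together yield the reflection identity $\delta_m=-\delta_{r-m}$ for every integer $m$ (indeed $a^{r-m}=-a^{-m}$ and $a^{m-r}=-a^{m}$). A companion remark is that $\delta_m=\pm 2$ if and only if $a^m=\pm1$, i.e.\ if and only if $r\mid m$; since $r$ is an odd prime this is exactly what certifies that the products computed below are genuine nonidentity elements of $\mathcal{D}$.

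First I would check that $\mathcal{F}\subseteq\mathcal{D}$ and that $|\mathcal{F}|=\tfrac{3(r-1)}{2}$. Every matrix listed in $\mathcal{F}_1,\mathcal{F}_2,\mathcal{F}_3$ has determinant $1$ and trace $a^i+a^{-i}=\delta_i$ with $1\leq i\leq\tfrac{r-1}{2}$, hence lies in $\mathcal{D}$ by the lemma. For the count, observe that $\mathcal{F}_1$ consists of diagonal matrices, $\mathcal{F}_2$ of strictly lower-triangular ones and $\mathcal{F}_3$ of strictly upper-triangular ones, the off-diagonal entry $a^i-a^{-i}$ being nonzero because $1\leq i\leq\tfrac{r-1}{2}$ forces $r\nmid i$. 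Thus the three families are pairwise disjoint, even after the projective identification $A\sim -A$: matching a matrix with $\pm$ a matrix of a different shape is impossible, and within a single family $a^i=\pm a^j$ forces $i=j$ (the sign $-$ would require $a^{i-j}=a^r$, i.e.\ $|i-j|\equiv r\pmod{2r}$, which is out of range). Each family has $\tfrac{r-1}{2}$ distinct members, so $|\mathcal{F}|=\tfrac{3(r-1)}{2}$.

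The heart of the proof is the clique property: for distinct $g,h\in\mathcal{F}$ I must show $gh^{-1}\in\mathcal{D}$. Adjacency in the Cayley graph $\overline{\Gamma_{G_q}}$ is symmetric because its connection set $\mathcal{D}$ is closed under inverses ($R^i$ is conjugate to $R^{-i}$ via $\left[\begin{smallmatrix}0&1\\-1&0\end{smallmatrix}\right]$), and moreover $\tr(X)=\tr(X^{-1})$ for $X\in\sln{2}{q}$, so the order of the pair is immaterial. I would organize the verification by the pair of families. A short matrix multiplication reveals a clean dichotomy: if $g,h$ lie in the \emph{same} family then $\tr(gh^{-1})=\delta_{i-j}$, whereas if they lie in \emph{two different} families then $\tr(gh^{-1})=\delta_{i+j}$. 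In the same-family case $1\leq|i-j|\leq\tfrac{r-3}{2}$, so the trace is already $\delta_\ell$ with $\ell$ in range. In the cross-family case $2\leq i+j\leq r-1$; if $i+j\leq\tfrac{r-1}{2}$ the trace is $\delta_{i+j}$ directly, and otherwise the reflection identity gives $\delta_{i+j}=-\delta_{r-(i+j)}$ with $1\leq r-(i+j)\leq\tfrac{r-1}{2}$. In every case the relevant index lies strictly between $0$ and $r$, hence is not divisible by $r$, so the trace is never $\pm2$ and the product is a genuine member of $\mathcal{D}$. Therefore all pairs are adjacent and $\mathcal{F}$ is a clique.

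Finally, since $\mathcal{F}\subseteq\mathcal{D}$ the identity $\overline{I}$ is adjacent in $\overline{\Gamma_{G_q}}$ to every element of $\mathcal{F}$, so $\mathcal{F}\cup\{\overline{I}\}$ is a clique of size $\tfrac{3(r-1)}{2}+1=\tfrac{3r-1}{2}$, which corresponds to an intersecting set of $G_q$ of that size. Dividing by $|H_r^-|=r$ yields $\rho(G_q)\geq\tfrac{3r-1}{2r}$. The only genuinely laborious step is the trace bookkeeping across the six family-pairs, and in particular the mixed pair $\mathcal{F}_2$--$\mathcal{F}_3$, where the off-diagonal cancellations in the $(2,2)$-entry must be tracked carefully; once this is done, all six computations collapse to the two formulas $\delta_{i-j}$ and $\delta_{i+j}$, after which the identity $\delta_m=-\delta_{r-m}$ does the remaining work.
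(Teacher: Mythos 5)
Your proof is correct: the trace computations (same family gives $\tr(gh^{-1})=\delta_{i-j}$, distinct families give $\delta_{i+j}$, handled via $\delta_m=-\delta_{r-m}$), the disjointness count in $\psl{2}{q}$, and the final adjunction of $\overline{I}$ all check out. The paper omits this argument entirely, calling the lemma straightforward, and your direct verification is precisely the computation it has in mind.
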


		\subsection*{Acknowledgement}
		We are grateful to the reviewers for their suggestions, which greatly improved the presentation of the paper.
		
		\subsection*{Statements and Declarations}
		\hfill
		
		\noindent {\it Competing interests.} None.
		
		\noindent {\it Data availability. }All computations are available in Appendix~\ref{sect:data}.
		
		\appendix
		\section{Computations}\label{sect:data}
		
		The entries of TABLE \ref{table} are output obtained using \verb|Sagemath| \cite{sagemath} of the intersection densities of $\psl{2}{q}$ acting transitively with stabilizers isomorphic to a cyclic group $H_r^\varepsilon$ of order $r$, where $\varepsilon =\pm $. The graph $\Gamma$ is the first subconstituent of the complement of the corresponding derangement graph in this table.
		
		\renewcommand{\arraystretch}{1.5} 
		\begin{table}[t]
			\centering
			\begin{tabular}{|c|c|c|c|c|c|c|} \hline
				$r$&$q$ & $\varepsilon$ & $\omega(\Gamma)$ & Intersection density & \verb|IsRegular| & Number of components \\ \hline \hline
				&$9$  & $+$ & $7$ & $\frac{9}{5}$ & \verb|False| & $1$ \\  \cline{2-7}
				&$11$  & $-$ & $10$ & $\frac{12}{5}$ & \verb|False| & $1$ \\  \cline{2-7}
				&$19$  & $+$ & $4$ & $\frac{6}{5}$ & \verb|False| & $2$ \\  \cline{2-7}
				&$29$  & $+$ & $3$ & $1$ & \verb|False| & $2$ \\ \cline{2-7}
				&$31$  & $-$ & $5$ & $\frac{7}{5}$ & \verb|False| & $1$ \\  \cline{2-7}
				&$41$  & $-$ & $5$ & $\frac{7}{5}$ & \verb|False| & $1$ \\  \cline{2-7}
				$5$&$49$  & $+$ & $3$ & $1$ & \verb|False| & $2$ \\  \cline{2-7}
				&$59$  & $+$ & $3$ & $1$ & \verb|False| & $2$ \\  \cline{2-7}
				&$61$  & $-$ & $5$ & $\frac{7}{5}$ & \verb|False| & $1$ \\  \cline{2-7}
				&$71$  & $-$ & $5$ & $\frac{7}{5}$ & \verb|False| & $1$ \\  \cline{2-7}
				&$79$  & $+$ & $3$ & $1$ & \verb|False| & $2$ \\  \cline{2-7}
				&$81$  & $-$ & $7$ & $\frac{9}{5}$ & \verb|False| & $1$ \\  \cline{2-7}
				&$89$  & $+$ & $3$ & $1$ & \verb|False| & $20$ \\  \hline
				&$13$  & $+$ & $7$ & $\frac{9}{7}$ & \verb|False| & $1$ \\  \cline{2-7}
				&$27$  & $+$ & $5$ & $1$ & \verb|False| & $1$ \\  \cline{2-7}
				&$29$  & $-$ & $8$ & $\frac{10}{7}$ & \verb|False| & $1$ \\  \cline{2-7}
				$7$&$41$  & $+$ & $5$ & $1$ & \verb|False| & $2$ \\  \cline{2-7}
				&$43$ & $-$ & $8$ & $\frac{10}{7}$ & \verb|False| & $1$ \\  \cline{2-7}
				&$71$  & $-$ & $8$ & $\frac{10}{7}$ & \verb|False| & $1$ \\  \cline{2-7}
				&$83$  & $+$ & $5$ & $1$ & \verb|False| & $2$ \\  \cline{2-7}
				&$97$  & $+$ & $5$ & $1$ & \verb|False| & $2$ \\  \hline
			\end{tabular}
			\caption{The intersection densities of $\psl{2}{q}$ acting transitively with stabilizers conjugate to $H_r^\varepsilon$}\label{table}
		\end{table}
		
		\newpage


\begin{thebibliography}{10}
			
			\bibitem{behajaina2023intersection}
			A.~Behajaina, R.~Maleki, and A.~S. Razafimahatratra.
			\newblock On the intersection density of the symmetric group acting on uniform
			subsets of small size.
			\newblock {\em Linear Algebra and its Applications}, 664:61--103, 2023.
			
			\bibitem{behajaina2024intersection}
			A.~Behajaina, R.~Maleki, and A.~S. Razafimahatratra.
			\newblock Intersection density of imprimitive groups of degree pq.
			\newblock {\em J. Combin. Ser. A}, {\bf 208}:105922, 2024.
			
			\bibitem{BDR88}
			I.~Broere, D.~D{\"o}man, and J.~N. Ridley.
			\newblock The clique numbers and chromatic numbers of certain {Paley} graphs.
			\newblock {\em Quaest. Math.}, 11(1):91--93, 1988.
			
			\bibitem{cazzola2025kronecker}
			M.~Cazzola, L.~Gogniat, and P.~Spiga.
			\newblock Kronecker classes and cliques in derangement graphs.
			\newblock {\em arXiv preprint arXiv:2502.01287}, 2025.
			
			\bibitem{dobson2022symmetry}
			T.~Dobson, A.~Malni{\v{c}}, and D.~Maru{\v{s}}i{\v{c}}.
			\newblock {\em Symmetry in graphs}, volume~{\bf 198}.
			\newblock Cambridge University Press, 2022.
			
			\bibitem{hujdurovic2025intersection}
			A.~Hujdurovi{\'c}, I.~Kov{\'a}cs, K.~Kutnar, and D.~Maru{\v{s}}i{\v{c}}.
			\newblock Intersection density of transitive groups with small cyclic point
			stabilizers.
			\newblock {\em European J. Combin.}, {\bf 124}:104079, 2025.
			
			\bibitem{hujdurovic2022intersection}
			A.~Hujdurovi{\'c}, K.~Kutnar, B.~Kuzma, D.~Maru{\v{s}}i{\v{c}},
			{\v{S}}.~Miklavi{\v{c}}, and M.~Orel.
			\newblock On intersection density of transitive groups of degree a product of
			two odd primes.
			\newblock {\em Finite Fields Appl.}, {\bf 78}:101975, 2022.
			
			\bibitem{hujdurovic2021intersection}
			A.~Hujdurovi\'c, K.~Kutnar, D.~Maru\v{s}i\v{c}, and {\v{S}}.~Miklavi\v{c}.
			\newblock Intersection density of transitive groups of certain degrees.
			\newblock {\em Algebr. Comb.}, {\bf 5}(2):289--297, 2022.
			
			\bibitem{king2005subgroup}
			O.~H. King.
			\newblock The subgroup structure of finite classical groups in terms of
			geometric configurations.
			\newblock In {\em Surveys in Combinatorics}, pages 29--56. Cambridge University
			Press, 2005.
			
			\bibitem{kutnar2023intersection}
			K.~Kutnar, D.~Maru{\v{s}}i{\v{c}}, and C.~Pujol.
			\newblock Intersection density of cubic symmetric graphs.
			\newblock {\em J. Algebraic Combin.}, pages 1--14, 2023.
			
			\bibitem{li2020erd}
			C.~H. Li, S.~J. Song, and V.~Pantangi.
			\newblock {E}rd{\H{o}}s-{K}o-{R}ado problems for permutation groups.
			\newblock {\em arXiv preprint arXiv:2006.10339}, 2020.
			
			\bibitem{meagher2024intersection}
			K.~Meagher and A.~S. Razafimahatratra.
			\newblock On the intersection density of the {K}neser graph ${K}(n, 3)$.
			\newblock {\em European Journal of Combinatorics}, 118:103910, 2024.
			
			\bibitem{meagher2025intersection}
			K.~Meagher and A.~S. Razafimahatratra.
			\newblock The intersection density of cubic arc-transitive graphs with $2
			$-arc-regular full automorphism group equal to ${PGL}_2 (q) $.
			\newblock {\em Combinatorial Theory (to appear)}, 2025.
			
			\bibitem{meagher180triangles}
			K.~Meagher, A.~S. Razafimahatratra, and P.~Spiga.
			\newblock On triangles in derangement graphs.
			\newblock {\em J. Combin. Ser. A}, {\bf 180}:105390, 2021.
			
			\bibitem{meagher2016erdHos}
			K.~Meagher, P.~Spiga, and P.~H. Tiep.
			\newblock An {E}rd{\H{o}}s--{K}o--{R}ado theorem for finite 2-transitive
			groups.
			\newblock {\em European J. Combin.}, {\bf 55}:100--118, 2016.
			
			\bibitem{sagemath}
			{The Sage Developers}.
			\newblock {\em {S}ageMath, the {S}age {M}athematics {S}oftware {S}ystem
				({V}ersion 10.6)}, 2025.
			\newblock \url{ https://www.sagemath.org}.
			
		\end{thebibliography}
	\end{document}